\documentclass{article}
\usepackage{tikz}
\usetikzlibrary{arrows.meta,positioning,fit,calc}
\usepackage[a4paper]{geometry}
\usepackage{algorithm,algpseudocode}
\usepackage{graphicx}
\usepackage{amsmath}
\allowdisplaybreaks
\usepackage{amsthm}
\usepackage{amssymb}
\usepackage{hyperref}
\usepackage{subcaption}
\usepackage[nameinlink,capitalise,noabbrev]{cleveref} 
\usepackage{caption}
\usepackage{booktabs}
\usepackage{xcolor}
\usepackage[T1]{fontenc}
\usepackage{multirow}
\usepackage{nicematrix}
\usepackage{cite}
\usepackage{tcolorbox}

\usepackage{bm}
\usepackage{setspace}
\usepackage{enumitem}
\usepackage{mathrsfs}
\usepackage{mathtools}

\newcommand{\dt}{\, \mathrm{d} t}
\newcommand{\dx}{\, \mathrm{d} x}

\newcommand{\e}{\mathrm{e}}

\newcommand{\Dn}[2]{\frac{\mathrm{d}^{#1}}{\mathrm{d}{#2}^{#1}}}

\theoremstyle{plain}
\newtheorem{theorem}{Theorem}[section]
\newtheorem{lemma}[theorem]{Lemma}

\theoremstyle{definition}
\newtheorem{definition}{Definition}

\theoremstyle{remark}
\newtheorem{remark}{Remark}

\definecolor{bkgndcolor}{rgb}{1,1,1}

\newenvironment{keywords}{\medskip\textbf{Keywords:}}{}

\makeatletter
\renewcommand{\theHALG@line}{\thealgorithm.\arabic{ALG@line}}
\makeatother

\title{On the representation for stochastic graph delay propagation}
\author{Shibo Zeng, Weiguo Gao, Yingzhou Li\thanks{Corresponding author.}\\
School of Mathematical Sciences, Fudan University, Shanghai, China\\
\texttt{23110180055@m.fudan.edu.cn}, \texttt{wggao@fudan.edu.cn}, \texttt{yingzhouli@fudan.edu.cn}}
\date{}

\begin{document}
\pagecolor{bkgndcolor}

\maketitle

\begin{abstract}
In this work, we utilize the Hermite expansion to approximate the distributions of the sum and maximum of independent random variables. We model distributions with a three-segment representation, where the left and right tails are respectively modeled as combinations of Hermite functions, and the intermediate segment is approximated by piecewise polynomials.
This approximation admits rigorous $L^2$- and pointwise convergence properties supported by classical results. We develop an algorithmic framework for applying our model to the graph delay propagation problem, where sum and max operations are performed on the proposed model structure. Numerical experiments demonstrate that our model can capture the quantile values with high accuracy compared to Monte Carlo simulation results, significantly outperforming classical Gaussian-based models.

\end{abstract}
\begin{keywords}
    graph delay propagation, 
    the Hermite expansion, 
    tail approximation, 
    Gaussian decay,
    nonlinear least-squares 
\end{keywords}

\section{Introduction}
\subsection{Background}
This paper studies the problem of estimating delay propagation on a directed acyclic graph (DAG), which has long been a topic of interest in static timing analysis (STA).
\@ Suppose $G = (V,E)$ is a DAG, where $V$ denotes the node set and $E$ denotes the edge set. 
Each node and edge is associated with a \textit{delay}, representing the time it takes for a signal to pass through. 
The delay propagation problem aims to determine the worst-case signal arrival times from the source node(s) to the sink node(s). These arrival times are then checked against the clocking constraints to identify potential timing violations.
These violations may cause registers to capture incorrect data, potentially resulting in function failure of the entire circuit.

There are two general approaches to the delay propagation problem in STA:\@ \textit{graph-based analysis (GBA)} and \textit{path-based analysis (PBA)}. A detailed introduction to these two methods can be found in~\cite{chadha2009sta}. GBA is often used in the first stage of timing analysis, where delays are propagated iteratively along the graph. For each node $v$, the arrival time $D(v)$ is computed in topological order according to  
\begin{equation}\label{eq:delayProp}
    D(v) = \max\limits_{\substack{u\in V\\(u,v)\in E }}\bigl\{ D(u) + d{(u,v)} \bigr\} + d(v),
\end{equation}
where $d(u,v)$ denotes the incremental delay on edge $(u,v)$, and $d(v)$ denotes the incremental delay incurred when the signal passes through node $v$. 

After completing the GBA stage, paths that potentially violate the timing constraints will be identified. The PBA method is then applied to perform a more detailed and accurate examination, though at a higher computational cost. In this paper, we focus on the GBA framework, within which statistical static timing analysis (SSTA) is conducted. 

In SSTA, each delay is modeled as a random variable due to on-chip variation (OCV) arising from spatial and temporal fluctuations, or a sum of random variables, each capturing the uncertainty contributed by a different source of variation. Under this formulation, our objective is to efficiently compute, for every node $v$ in the graph, the accumulated delay (i.e., the arrival time) from the source node(s), based on Eq.~\eqref{eq:delayProp}.

Modeling node delays and edge delays as random variables reduces the computation of $D(v)$ to a sequence of binary operations over random variables, where each operation is either a max operation or a sum operation. To be precise, we consider the following basic operations on independent random variables $X_1$ and $X_2$: 
\[
    X = X_1 + X_2, \quad \text{or} \quad X = \max\{X_1, X_2\}. 
\]
Other binary operations that may arise in practice, such as subtraction and minimum,
\[
    X = X_1 - X_2, \quad \text{or} \quad X = \min\{X_1, X_2\},
\]
can be equivalently transformed into the above forms.
Therefore, without loss of generality, we restrict our discussion to the sum and max operations.

Although $X_1$ and $X_2$ might be statistically dependent in practical circuits, we ignore the correlation between $X_1$ and $X_2$ throughout the calculation, as is commonly assumed in STA~\cite{bosak_statistical_2024}. These two binary operations put forward two crucial problems. One is how to appropriately model the random variables, and the other is how to efficiently and accurately evaluate the max and sum operations.

A widely adopted simplification is to model all delay random variables as Gaussian. The probabilistic density function (PDF) of a Gaussian random variable $\mathcal{N}(\mu,\sigma^2)$ is given by 
\begin{equation}
    \phi(x;\mu,\sigma) \coloneqq \frac{1}{\sqrt{2\pi}\sigma}\e^{-\frac{(x-\mu)^2}{2\sigma^2}}.    
\end{equation}
When $\mu = 0$ and $\sigma = 1$, this expression reduces to the PDF of the standard normal distribution $\mathcal{N}(0, 1)$, which we denote by $\phi(x)$ for brevity.

The cumulative distribution function (CDF) of a Gaussian variable $\mathcal{N}(\mu,\sigma^2)$ is given by 
\begin{equation}
    \Phi(x;\mu,\sigma) \coloneqq \Phi\left(\frac{x-\mu}{\sigma}\right) = \frac{1}{2}\left[1 + \operatorname{erf}\left(\frac{x-\mu}{\sqrt{2}\sigma}\right)\right],
\end{equation}
where $\operatorname{erf}(x) = \frac{2}{\sqrt{\pi}} \int_{0}^{x}\e^{-t^2}\dt$ is the standard error function.
However, Gaussian random variables fail to capture the characteristics of delay, since the advancement of manufacturing technology has made the distribution of gate delay and interconnect delay increasingly complex. Moreover, the maximum of independent Gaussian random variables is not necessarily Gaussian. When more factors are taken into account, such as the skewness of the random variables, more sophisticated models become necessary. 

\subsection{Related Work}
There have been massive studies concerning the delay propagation problem. The earliest methods, as mentioned above, modeled each node delay and interconnect delay as a Gaussian random variable. Their algorithms were based on the idea of the classic Clark's algorithm~\cite{ClarkGreatest1961}. For example, Berkelaar~\cite{berkelaar1997statistical} proposed a linear-time algorithm for Gaussian delay propagation under the assumption that all delays are independent, while Tsukiyama et al.~\cite{TsukiyamaCorrelation2001} considered the correlations between delays. 

A more advanced Gaussian framework was proposed by Kang et al.~\cite{Kunhyuk_Statistical_2005}, who introduced Levelized Covariance Propagation (LCP), in which covariance information is maintained and propagated level-by-level. Using Clark's algorithm, LCP computes the mean $\mu_{\max}$ and standard deviation $\sigma_{\max}$ of the maximum of correlated Gaussian random variables and then projects the result back to $\mathcal{N}(\mu_{\max}, \sigma_{\max}^2)$ for further propagation. This can be viewed as a \textit{moment-matching} approach, which is also useful for sums of distributions that are not closed under addition. 

Chopra et al.~\cite{Chopra_A_2006} took skewness into consideration by modeling the distributions to propagate as skew-normal random variables. They proposed a statistical max operation method that captures the skewness of the delay distributions, which shows improved accuracy results compared to the classical Clark's approach. 

Hcine and Bouallegue~\cite{ben_hcine_fitting_2014} made a similar effort, where the sum of independent lognormal random variables was approximated by a log-skew-normal distribution. However, instead of using the moment-matching method as proposed by Fenton~\cite{FentonLN1960}, the authors chose a tail-matching method to capture the tail asymptotics of the desired distribution. These log-type distributions are suitable for modeling delay since they naturally preserve positivity during propagation. 

Recently, Jin et al.~\cite{JinKurtosis2022} proposed a statistical delay model based on the log-extended-skew-normal (LESN) distribution, which extends the conventional log-skew-normal model by incorporating kurtosis control. They also applied the moment-matching method. In particular, the LESN model has a precise prediction of the $3\sigma$ delay, which is crucial for the worst-case analysis.

A different model has been formulated by Cheng et al.~\cite{ChengPolynomial2008} where all delays to propagate were modeled as quadratic polynomials of several independent random variables. While this model supports closed-form delay summation, it also provides an approximation method for computing the maximum of independent random variables by second-degree polynomial fitting. 

Apart from the method using moment-matching and polynomial fitting, Azuma et al.~\cite{AzumaApproximating2017} introduced a novel framework for approximating the maximum of Gaussian random variables by a Gaussian mixture model (GMM). Utilizing the fact that the convolution of PDFs of Gaussian random variables is still a Gaussian PDF, computing the sum in this model is straightforward. To address the maximum problem, the authors also used a moment-matching method to fit the first two moments of the maximum of GMMs. Lately, a series of works~\cite{FreeleyStatistical2018, mishagli_radial_2020, mishagli_gate_2024} have been proposed to further apply this method to the large-scale graph propagation problem. Furthermore, Bos{\'a}k et al.~\cite{bosak_statistical_2024} used a histogram-based approach and optimization methods to propagate delay distributions on a graph. 

Our work is inspired not only by the works mentioned above, but also by the classical Hermite polynomial expansion theory. The Gram--Charlier series, for example, is a well-known method for approximating a probability distribution by a series of Hermite polynomials (for probabilists) multiplied by a Gaussian kernel through moment-matching, as discussed in~\cite{cramer1999mathematical}. In recent years, there have also been several works that apply this method to problems in different fields. For example, Capodaglio et al.~\cite{capodaglio_approximation_2021} applied the Gram--Charlier series to approximate the PDF of solutions to PDEs with random parameters, and Dufresne and Li~\cite{DufresneLi2016} used it in Asian option pricing. However, all these methods face the poor pointwise convergence property of the Gram--Charlier series, as was discussed in~\cite{blinnikov_expansions_1998}. 

\subsection{Contributions}
We present a general framework for representing and propagating random variables on DAGs under binary operations such as max and sum. Each distribution is approximated by a three-segment formulation consisting of polynomial-Gaussian tail models and an interpolation approach across the intermediate segment. During the propagation process, this structure is preserved, whose convergence properties are proved base on the theory of the Hermite function expansion. Moreover, we presented an algorithmic framework to evaluate the binary operations and reproject the input and resulting distributions back into our model family through CDF sampling and a nonlinear least-squares tail-fitting procedure. This yields a complete and operational representation that remains stable under propagation. Numerical experiments are conducted to demonstrate that our model and algorithm can capture the quantile values with high accuracy with respect to Monte Carlo simulation results, which improves over classical Gaussian-based models.

\medskip
The rest of this paper is organized as follows.
In Sec.~\ref{sec:pre}, we introduce briefly the mathematical preliminaries that will be used later in our model.
In Sec.~\ref{sec:model}, we present the mathematical formulation of the delay propagation model.
In Sec.~\ref{sec:complete}, we illustrate the completeness of the model by proving the $L^2$-convergence and pointwise convergence of our model.
In Sec.~\ref{sec:imp}, we introduce algorithms for propagating the proposed distributions through sum and max operations.
In Sec.~\ref{sec:numeric}, we present numerical results of our model and our proposed algorithms.

\section{Preliminaries}\label{sec:pre}
In this section, we summarize the mathematical preliminaries needed for the analysis in subsequent sections. To support the convergence results established later, we review the standard and parametric Hermite polynomials and derive their associated orthogonality relations.
\subsection{Standard Hermite polynomials}
Unless otherwise stated, we adopt the physicists' convention for the Hermite polynomials, defined by
\[
H_k(x) \coloneqq (-1)^k\e^{x^2}\Dn{k}{x}\e^{-x^2}, \quad k = 0, 1, 2, \dotsc .
\]
We refer to these as the \textit{standard} Hermite polynomials to distinguish them from the \textit{parametric} Hermite polynomials, which will be discussed in the following subsection. 

Function $H_k(x)$ is a polynomial of degree $k$ in $x$. The Hermite polynomials satisfy the following three-term recurrence relation:
\begin{equation*}\label{eq:HermiteRecurrence}
    H_{k+1}(x) = 2xH_k(x) - 2kH_{k-1}(x), \quad k = 1, 2, \dotsc ,
\end{equation*}
with $H_0(x) = 1$ and $H_1(x) = 2x$.
They are orthogonal with respect to the weight function $\omega(x)\coloneqq \e^{-x^2}$: 
\[
\int_{-\infty}^{+\infty} H_m(x)H_n(x)\e^{-x^2}\dx = \gamma_n\delta_{mn},
\]
where the normalization constant $\gamma_n$ is given by $\gamma_n \coloneqq 2^n n!\sqrt{\pi}$.

\subsection{Parametric Hermite polynomials and Hermite functions}
For better approximation capability, we use the parametric Hermite polynomials/functions, where we modify the weight function as $\omega(x;\mu,\sigma) \coloneqq \e^{-{(x-\mu)^2}/{\sigma^2}}$ for $\sigma > 0$ and $\mu$ being real parameters. 
Also, we denote by $H_k(x;\mu,\sigma) \coloneqq H_k(\frac{x-\mu}{\sigma})$ the parametric Hermite polynomials.

The orthogonality of the parametric Hermite polynomials can be established through a straightforward computation:
\[
\int_{-\infty}^{+\infty} H_m(x;\mu,\sigma)H_n(x;\mu,\sigma)\e^{-\frac{(x-\mu)^2}{\sigma^2}}\dx = \sigma\gamma_n\delta_{mn}.
\]
The parametric Hermite functions $\psi_k(x;\mu,\sigma)$ are defined by
\[\psi_k(x;\mu,\sigma) \coloneqq \frac{1}{\sqrt{\sigma\gamma_k}} H_k(x;\mu,\sigma)\e^{-\frac{(x-\mu)^2}{2\sigma^2}}.\] 
The $L^2$-completeness of \(\{\psi_k(x;\mu,\sigma)\}_{k=0}^{\infty}\) is guaranteed by the following lemma, which is a direct generalization of the standard case in~\cite{szeg1939orthogonal}.

\begin{lemma}[$L^2$-convergence of the parametric Hermite function expansion]\label{lem:HermtitepolyCompletenessL2param}
    The set of parametric Hermite functions \( \{\psi_k(x;\mu,\sigma)\}_{k=0}^{\infty} \) forms a complete orthogonal basis in \( L^2(\mathbb{R}) \).
    That is, for any function \( f \in L^2(\mathbb{R}) \), there exists a unique sequence of coefficients \( \{c_k(\mu,\sigma)\}_{k=0}^{\infty} \) such that we can write $f(x)$ as 
    \begin{equation*}
     f(x) = \sum_{k=0}^{\infty} c_k(\mu,\sigma) \psi_k(x;\mu,\sigma).   
    \end{equation*}
    The coefficients \( c_k(\mu,\sigma) \) are given by the inner product in \(  L^2(\mathbb{R})  \):
    \[
    c_k(\mu,\sigma) = \int_{-\infty}^{+\infty} f(x) \psi_k(x;\mu,\sigma) \dx.
    \]
    The projection of \( f(x) \) onto the span of the first \( N \) parametric Hermite functions is given by
    \[
    \mathscr{P}_N^f(x;\mu,\sigma) = \sum_{k=0}^{N} c_k(\mu,\sigma) \psi_k(x;\mu,\sigma).
    \]
    Then, the sequence of \( \mathscr{P}_N^f(x) \) converges to \( f(x) \) in the \( L^2 \)-norm:
    \[
    \lim_{N \to \infty} \int_{-\infty}^{\infty} | f(x) -  \mathscr{P}_N^f(x;\mu,\sigma)|^2  \dx = 0.
    \]
\end{lemma}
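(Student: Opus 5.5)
The plan is to reduce the parametric case to the standard one (the classical completeness of the Hermite functions $\{\psi_k(x;0,1)\}$ in $L^2(\mathbb{R})$, as in~\cite{szeg1939orthogonal}) through an affine change of variables that is unitary on $L^2(\mathbb{R})$. Define the operator $U_{\mu,\sigma}\colon L^2(\mathbb{R})\to L^2(\mathbb{R})$ by
\[
(U_{\mu,\sigma} g)(x) \coloneqq \frac{1}{\sqrt{\sigma}}\, g\!\left(\frac{x-\mu}{\sigma}\right).
\]
The substitution $t=(x-\mu)/\sigma$ shows $\|U_{\mu,\sigma}g\|_{L^2}=\|g\|_{L^2}$. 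Moreover, $U_{\mu,\sigma}$ is surjective, with inverse $(U_{\mu,\sigma}^{-1}f)(t)=\sqrt{\sigma}\,f(\sigma t+\mu)$. Hence $U_{\mu,\sigma}$ is unitary.

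A direct check from the definitions gives $\psi_k(x;\mu,\sigma)=U_{\mu,\sigma}\psi_k(\cdot;0,1)(x)$. The factor $1/\sqrt{\sigma\gamma_k}$ splits as $\sigma^{-1/2}\cdot\gamma_k^{-1/2}$, which matches the normalization of the standard Hermite function. Orthonormality of $\{\psi_k(\cdot;\mu,\sigma)\}$ then follows either from unitarity or from the orthogonality relation for parametric Hermite polynomials already computed above.

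For completeness, suppose $f\in L^2$ is orthogonal to every $\psi_k(\cdot;\mu,\sigma)$. Since unitaries preserve inner products, $g=U^{-1}_{\mu,\sigma}f$ is orthogonal to every standard Hermite function, so $g=0$ by the standard result, and therefore $f=0$. The standard completeness itself is the one substantive ingredient, and we take it as given from~\cite{szeg1939orthogonal}. If a self-contained argument were wanted, it is the main obstacle. I would prove it by showing that $\int g(t)\,t^k \e^{-t^2/2}\,\mathrm{d}t=0$ for all $k$ forces the entire function $F(z)=\int g(t)\e^{-t^2/2}\e^{izt}\,\mathrm{d}t$ to vanish identically, hence $\widehat{g\e^{-t^2/2}}=0$ and $g=0$. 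Differentiation under the integral is justified by the Gaussian factor together with Cauchy--Schwarz.

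The remaining claims are general Hilbert space facts for a complete orthonormal system. The expansion $f=\sum_k c_k\psi_k$ converges in $L^2$, and its coefficients are unique because they are forced by orthonormality to equal $c_k(\mu,\sigma)=\langle f,\psi_k(\cdot;\mu,\sigma)\rangle$; the functions are real, so no conjugate appears. For the convergence of the partial sums, Parseval's identity gives
\[
\|f-\mathscr{P}_N^f\|_{L^2}^2=\sum_{k>N}|c_k|^2.
\]
This tail tends to zero because Bessel's inequality gives $\sum_k|c_k|^2\le\|f\|^2<\infty$. That yields the stated limit.
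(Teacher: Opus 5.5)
Your proposal is correct and follows essentially the same route as the paper. The paper gives no separate proof: it presents the lemma as a direct generalization of the standard Hermite-function completeness result under the affine change of variables $x\mapsto (x-\mu)/\sigma$, and your unitary $U_{\mu,\sigma}$ (whose factor $\sigma^{-1/2}$ accounts for the $\sqrt{\sigma}$ in the normalization $1/\sqrt{\sigma\gamma_k}$) makes that reduction and the subsequent Hilbert-space steps explicit.
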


The pointwise convergence property of the parametric Hermite polynomial expansion is
analogous to that of the standard case given in~\cite{uspensky_development_1926}.
Indeed, the parametric result is obtained from the standard case by the affine transform
\(x \mapsto (x-\mu)/\sigma\). Moreover, the pointwise convergence of the parametric
Hermite \emph{function} expansion for \(f \in L^2(\mathbb{R})\) reduces to the pointwise
convergence of the parametric Hermite \emph{polynomial} expansion for
\(g=f\,\sqrt{\omega(\,\cdot\,;\mu,\sigma)} \in L^2_{\omega(\,\cdot\,;\mu,\sigma)}(\mathbb{R})\).

We state the pointwise convergence property of the parametric Hermite function expansion in the lemma below, paralleling the main theorem of~\cite{uspensky_development_1926}. 

\begin{lemma}[Pointwise convergence of the parametric Hermite function expansion]\label{lem:HermtitefuncCompletenessPointwise}
    Suppose \( f(x) \) satisfies the following conditions:
    \begin{enumerate}[label=(\arabic*)]
        \item \(f(x)\) is absolutely integrable in any finite interval,
        \item \(f(x)\) is of limited variation in a certain interval \((x-\delta,x+\delta)\) for some \(\delta > 0\),
        \item \(f(x)\) belongs to \(L^2(\mathbb{R})\).
    \end{enumerate}
    Then the series expansion $\sum\limits_{k=0}^{\infty} c_k(\mu,\sigma) \psi_k(x;\mu,\sigma)$ converges and satisfies
    \[
    \sum\limits_{k=0}^{\infty} c_k(\mu,\sigma) \psi_k(x;\mu,\sigma) = \frac{f(x^{+}) + f(x^{-})}{2}.
    \]
\end{lemma}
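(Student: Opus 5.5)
The plan is to reduce the statement to the standard case $\mu=0$, $\sigma=1$ by an affine change of variables, and then invoke the classical pointwise theorem of Uspensky~\cite{uspensky_development_1926} for the Hermite function expansion of a single function. Set $t=(x-\mu)/\sigma$ and $g(t)\coloneqq \sqrt{\sigma}\,f(\mu+\sigma t)$. Since $\psi_k(x;\mu,\sigma)=\sigma^{-1/2}\psi_k\bigl((x-\mu)/\sigma\bigr)$, with $\psi_k(t)=\gamma_k^{-1/2}H_k(t)\e^{-t^2/2}$ the standard Hermite functions, the substitution $x=\mu+\sigma t$ in the coefficient integral gives
\[
c_k(\mu,\sigma)=\int_{-\infty}^{+\infty} f(x)\psi_k(x;\mu,\sigma)\dx=\int_{-\infty}^{+\infty} g(t)\psi_k(t)\dt \eqqcolon \tilde c_k .
\]
Hence, with $t_0=(x-\mu)/\sigma$, the partial sums satisfy
\[
\sum_{k=0}^{N} c_k(\mu,\sigma)\psi_k(x;\mu,\sigma)=\sigma^{-1/2}\sum_{k=0}^{N}\tilde c_k\psi_k(t_0).
\]
So pointwise convergence of the parametric expansion of $f$ at $x$ is equivalent to pointwise convergence of the standard expansion of $g$ at $t_0$.

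Next I check that $g$ inherits hypotheses (1)--(3):
\begin{enumerate}[label=(\roman*)]
\item local absolute integrability is preserved, because affine maps send bounded intervals to bounded intervals and $\int_a^b|g|\dt=\sigma^{-1/2}\int_{\mu+\sigma a}^{\mu+\sigma b}|f|\dx$;
\item bounded variation of $f$ on $(x-\delta,x+\delta)$ becomes bounded variation of $g$ on $(t_0-\delta/\sigma,\,t_0+\delta/\sigma)$, since total variation is invariant under monotone reparametrization;
\item $\|g\|_{L^2}=\|f\|_{L^2}$, so $g\in L^2(\mathbb{R})$.
\end{enumerate}
In addition, $g(t_0^\pm)=\sqrt{\sigma}f(x^\pm)$, because $\sigma>0$ preserves the orientation of one-sided limits.

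Now apply the standard theorem. Uspensky's result is phrased for the expansion of $g(t)$ in terms of $H_k(t)\e^{-t^2}$, with coefficients $\int g H_k\dt/\gamma_k$, or equivalently, as the paper remarks, for the polynomial expansion of $g\e^{t^2/2}$ in $L^2_{\e^{-t^2}}$. So I will first rewrite the Hermite function series $\sum\tilde c_k\psi_k(t_0)$ in that form. The rewriting multiplies every partial sum by the fixed factor $\e^{\pm t_0^2/2}$, so convergence and its limit transfer directly.

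The conditions in Uspensky's theorem are local integrability, local bounded variation near the point, and a global growth or integrability condition. Our $L^2$ hypothesis (3) is what supplies that global condition: by Cauchy--Schwarz, $\int|g(t)|\e^{-t^2/2}\dt<\infty$, which is at least as strong as the integrability Uspensky requires. The theorem then yields
\[
\sum_k\tilde c_k\psi_k(t_0)=\frac{g(t_0^+)+g(t_0^-)}{2}.
\]
Multiplying by $\sigma^{-1/2}$ gives the claimed value $\tfrac12\bigl(f(x^+)+f(x^-)\bigr)$.

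The main obstacle is not the rescaling, which is routine, but matching our hypotheses exactly to the form in which Uspensky states his global condition. That condition is a weighted integrability or growth assumption, and which weight appears depends on whether the theorem is stated for the function series or the polynomial series. I would first verify that $L^2$ together with local integrability implies the needed tail condition via Cauchy--Schwarz against the Gaussian weight. If the classical statement instead demands a pointwise decay bound, I would fall back on the Christoffel--Darboux/Dirichlet-kernel localization argument: use the uniform bound $\sup_k\|\psi_k\|_\infty<\infty$ to control the contribution of $g$ away from $t_0$ by $L^2$ arguments, and show via the Riemann--Lebesgue-type behaviour of the kernel that only the local bounded-variation part survives, producing the average of the one-sided limits.
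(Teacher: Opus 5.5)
Your route is the paper's route. The paper likewise passes from the standard case to the parametric one by $x\mapsto(x-\mu)/\sigma$, rewrites the Hermite function series as a Hermite polynomial series, and cites Uspensky. Your rescaling computations and items (i)--(iii) are correct. Moreover, $G\coloneqq g\,\e^{t^2/2}$ is the right function to expand; the paper's remark writes $f\sqrt{\omega}$ where $f/\sqrt{\omega}$ is meant.

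The step that fails as written is exactly the one you flag: matching hypothesis (3) to Uspensky's global condition. That condition, which the paper transcribes as item (3) via ``$g\in L^2_{\omega}$'', is the weighted-$L^2$ requirement $\int_{-\infty}^{+\infty}\e^{-t^2}G(t)^2\dt<\infty$ on the function whose polynomial series is taken. Your Cauchy--Schwarz bound gives only $\int|G|\e^{-t^2}\dt=\int|g|\e^{-t^2/2}\dt<\infty$, i.e.\ $G\in L^1_{\e^{-t^2}}$. Because the weight has finite mass, this is strictly \emph{weaker} than $G\in L^2_{\e^{-t^2}}$, so ``at least as strong'' is backwards. It does not even guarantee that the coefficients $\int G H_k\e^{-t^2}\dt$ exist for $k\ge 1$; take $G(t)=\e^{t^2}/(1+t^2)$. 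The repair needs no inequality: $\int\e^{-t^2}G^2\dt=\lVert g\rVert_{L^2}^2$, so (3) is literally Uspensky's hypothesis for $G$.

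There are also three smaller corrections.

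\begin{enumerate}
\item The two formulations you call equivalent are not. Expanding $g$ in $H_k(t)\e^{-t^2}$ with coefficients $\gamma_k^{-1}\int gH_k\dt$ gives the Gram--Charlier-type series, i.e.\ the polynomial series of $g\e^{t^2}$. Its Uspensky condition, $\int g^2\e^{t^2}\dt<\infty$, fails for general $g\in L^2(\mathbb{R})$. Only the $g\e^{t^2/2}$ version reproduces the Hermite function series.
\item The local hypotheses must be verified for $G$, not $g$. This is immediate: multiplying by the smooth factor $\e^{t^2/2}$ preserves local integrability and bounded variation on bounded intervals, and $G(t_0^\pm)=\e^{t_0^2/2}g(t_0^\pm)$.
\item Your fallback would not close using only $\sup_k\lVert\psi_k\rVert_\infty<\infty$. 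The Christoffel--Darboux kernel carries the factor $\sqrt{(N+1)/2}$, so uniform bounds leave a growing power of $N$ in the far-field term. Controlling it requires sharper, Plancherel--Rotach-type information on $\psi_N$.
\end{enumerate}

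Once (3) is matched correctly, the fallback is unnecessary and your argument coincides with the paper's.
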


\section{Model formulation}\label{sec:model} 
To accurately capture the delay distributions during propagation, especially in the tail segments, we observed that Gaussian models tend to underestimate the probability of large deviations. Inspired by the observation that empirical delay distributions often exhibit longer or skewed tails, we adopt a polynomial-modulated Gaussian model, where the Gaussian kernel is multiplied by a low-degree polynomial. This allows more flexibility in shaping the PDF while maintaining a Gaussian-like functional form. 

To further support efficient computation, we represent the intermediate segment PDF with piecewise polynomials, yielding a three-segment model for the PDF on the whole real line. The resulting form retains analytical tractability while significantly enhancing the approximation capacity.

\subsection{Basic PDF model}\label{sub-sec:PDFmodel}
We adopt a three-segment PDF model for propagating independent random variables on a graph.
The PDF of a random variable $X$ to be propagated along the graph is modeled as  
\begin{equation} \label{eq:modelPDF}
f(t) = \left\{
\begin{aligned}
    &\sum\limits_{j=0}^N c_j^{(l)}t^j\phi\bigl(t;\mu^{(l)},\sigma^{(l)}\bigr), && t \leq q_0, \\
    &P_i(t), && q_{i-1} \leq t < q_i, \; i = 1,2, \dotsc, n, \\
    &\sum\limits_{j=0}^N c_j^{(r)}t^j\phi\bigl(t;\mu^{(r)},\sigma^{(r)}\bigr), && t \geq q_n.
\end{aligned}
\right.
\end{equation}
Here $n$ denotes the number of partition points in the intermediate segment, and $N$ is the degree of the Gaussian-tail polynomials, both user-specified. The PDF on each subinterval $[q_{i-1}, q_i]$ in the intermediate segment is modeled by a polynomial $P_i(t)$, which is constructed to ensure continuity and smoothness of the overall PDF.\@
Unless otherwise stated, we use an equally spaced partition in the intermediate segment, i.e., $q_i-q_{i-1} = (q_n - q_0)/n$, $i = 1,2, \dotsc, n$. 

Suppose $F(x)$ is the CDF of $X$. We choose $q_0$ and $q_n$ as the quantile values corresponding to two prescribed CDF levels of $X$ for each random variable $X$ on the entire graph. For example, taking the CDF levels $0.135\%$ and $99.865\%$ yields the $\pm 3\sigma$ quantiles in the Gaussian case.
\begin{remark}
    (a) In our general model~\eqref{eq:modelPDF}, the intermediate segment of the PDF are piecewise polynomials of arbitrary order to preserve the regularity of the PDF.\@ However, in practice we typically assume the intermediate segment to be piecewise constant, i.e., $P_i(t) = (F(q_i) - F(q_{i-1}))/(q_i - q_{i-1})$, as this choice tends to maintain the nonnegativity of the PDF.\@ Selecting the degree of the intermediate polynomials involves a trade-off between model complexity and approximation accuracy. A more detailed study of the convergence rate for higher-degree models is deferred to future work. For the sake of simplicity, we assume $P_i(t)$ to be piecewise constant throughout the remainder of this paper. 

    (b) Our PDF model does not inherently provide non-negativity, which is a fundamental requirement for any valid probability density function. To address this issue, one may impose constraints on the leading coefficients of the tail polynomials. In the subsequent analysis of convergence properties, we restrict our attention to the case where $f$ is indeed a valid probability density function.
\end{remark}

\subsection{Basic CDF model}\label{sub-sec:CDFmodel}
During delay propagation, our primary interest lies in the CDF of the propagated random variables, since its behavior determines the target quantiles used to approximate the worst-case delay.

Given a random variable $X$ whose PDF takes the form~\eqref{eq:modelPDF}, the CDF of $X$ can be evaluated numerically at any point $x$. When $ q_0\leq x \leq q_n$, the CDF can be evaluated efficiently due to the specific structure of the intermediate segment. For $x < q_0$ or $x > q_n$, the CDF can be respectively determined by a linear combination of 
\[
I_i(x;\mu,\sigma) \triangleq \int_{-\infty}^{x} t^i \phi(t;\mu,\sigma)\,\dt,
\quad \text{or} \quad
J_i(x;\mu,\sigma) \triangleq \int_{x}^{\infty} t^i \phi(t;\mu,\sigma)\,\dt.
\]
We first derive a recurrence relation for $I_i(x;\mu,\sigma)$ by induction. Based on the relation 
\[
\frac{\mathrm{d}}{\mathrm{d}t}\phi(t;\mu,\sigma) = -\frac{t-\mu}{\sigma^2} \phi(t;\mu,\sigma),
\]
we obtain for arbitrary $i \geq 2$
\begin{align*}
    I_i(x;\mu,\sigma) &= \int_{-\infty}^x t^i\phi(t;\mu,\sigma)\dt \\
    &= \mu I_{i-1}(x;\mu,\sigma) -\sigma^2 \int_{-\infty}^{x} t^{i-1}\frac{\mathrm{d}}{\mathrm{d}t}\phi(t;\mu,\sigma) \dt\\
    &= \mu I_{i-1}(x;\mu,\sigma) - x^{i-1}\sigma^2 \phi(x;\mu,\sigma) + (i-1)\sigma^2 I_{i-2}(x;\mu,\sigma).
\end{align*}

For $i = 1$, a similar derivation yields
\[
I_1(x;\mu,\sigma) = \mu I_0(x;\mu,\sigma) - \sigma^2 \phi(x;\mu,\sigma).
\]
Thus, with a linear combination of elements in $\{I_i(x;\mu,\sigma)\}_{i\leq N}$, we can achieve any element in 
\[
\mathrm{span}\left\{
\Phi(x;\mu,\sigma),\ 
\phi(x;\mu,\sigma),\ 
x\phi(x;\mu,\sigma),\ 
\dots,\ 
x^{N-1}\phi(x;\mu,\sigma)
\right\}.
\]
Also, as shown in the derivation above, the transition matrix between the two sets of functions is upper-triangular with nonzero diagonal entries, and is therefore invertible.

By applying the same argument to $J_i$ for the right tail (see Appendix~\ref{AppendixA}), we obtain the following lemma:
\begin{lemma}\label{lem:isomorphism}
Fix $\mu\in\mathbb{R}$, $\sigma>0$, and $N\ge 1$. Then 
\begin{enumerate}[label=(\arabic*)]
\item The function spaces
\[
\mathrm{span}\bigl\{I_0(x;\mu,\sigma),I_1(x;\mu,\sigma),\dots,I_N(x;\mu,\sigma)\bigr\}
\]
and
\[
\mathrm{span}\bigl\{\Phi(x;\mu,\sigma),\phi(x;\mu,\sigma),x\phi(x;\mu,\sigma),\dots,x^{N-1}\phi(x;\mu,\sigma)\bigr\}
\]
coincide.
\item The function spaces
\[
\mathrm{span}\bigl\{J_0(x;\mu,\sigma),J_1(x;\mu,\sigma),\dots,J_N(x;\mu,\sigma)\bigr\}
\]
and
\[
\mathrm{span}\bigl\{1-\Phi(x;\mu,\sigma),\phi(x;\mu,\sigma),x\phi(x;\mu,\sigma),\dots,x^{N-1}\phi(x;\mu,\sigma)\bigr\}
\]
coincide. 
\end{enumerate} 
\end{lemma}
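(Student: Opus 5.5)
I would prove both parts by showing that the two spanning sets are related by an invertible triangular change of basis. Both spaces have $N+1$ generators, so it suffices to show that each $I_i$ (resp.\ $J_i$) is a combination of the target functions in which the "new" generator appears with a nonzero coefficient.

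For part (1), order the target family as $g_0=\Phi(x;\mu,\sigma)$ and $g_k=x^{k-1}\phi(x;\mu,\sigma)$ for $k=1,\dots,N$. I claim by strong induction on $i$ that
\[
I_i(x;\mu,\sigma)\in\mathrm{span}\{g_0,\dots,g_i\},
\]
with the coefficient of $g_i$ nonzero. This already fixes the upper-triangular structure.
\begin{itemize}
\item The base cases are $I_0=\Phi(x;\mu,\sigma)=g_0$ and $I_1=\mu I_0-\sigma^2\phi=\mu g_0-\sigma^2 g_1$, whose coefficient of $g_1$ is $-\sigma^2\neq0$.
\item For $i\ge2$, the recurrence derived above gives $I_i=\mu I_{i-1}-\sigma^2 x^{i-1}\phi+(i-1)\sigma^2 I_{i-2}$. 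By the inductive hypothesis, $I_{i-1}$ and $I_{i-2}$ lie in $\mathrm{span}\{g_0,\dots,g_{i-1}\}$. The new term is $-\sigma^2 g_i$, so the coefficient of $g_i$ is exactly $-\sigma^2\neq 0$.
\end{itemize}
Hence $(I_0,\dots,I_N)^{\top}=T(g_0,\dots,g_N)^{\top}$ with $T$ lower triangular (in this indexing) and diagonal $1,-\sigma^2,\dots,-\sigma^2$. Therefore $T$ is invertible, each $g_k$ is a combination of the $I_i$, and the two spans coincide.

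For part (2), I would repeat the integration by parts for $J_i$. Using $\frac{\mathrm d}{\mathrm dt}\phi=-\frac{t-\mu}{\sigma^2}\phi$ and the Gaussian decay at $+\infty$, which kills the boundary term there, one gets
\[
J_i=\mu J_{i-1}+\sigma^2x^{i-1}\phi(x;\mu,\sigma)+(i-1)\sigma^2J_{i-2}\quad (i\ge2).
\]
In the same way, $J_1=\mu J_0+\sigma^2\phi$, and $J_0=1-\Phi(x;\mu,\sigma)$. The same induction with $g_0=1-\Phi$ yields a triangular transition matrix with diagonal $1,\sigma^2,\dots,\sigma^2$, which is invertible.

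The only point needing care is the boundary terms in the integration by parts: $t^{i-1}\phi(t;\mu,\sigma)\to0$ as $t\to\pm\infty$, and all $I_i,J_i$ converge absolutely because polynomial-times-Gaussian is integrable. The sign flip in the boundary term for $J_i$ is the main detail to get right, but it does not affect invertibility, since only the nonvanishing of the diagonal matters. Note that linear independence of the $g_k$ is not needed: equality of spans follows directly from the mutual expressibility given by the invertible matrix.
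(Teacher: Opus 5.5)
Your proof is correct and follows essentially the same route as the paper: integration by parts gives the three-term recurrences for $I_i$ and $J_i$, and the resulting triangular transition matrix onto $\Phi$ (resp.\ $1-\Phi$), $\phi,\dots,x^{N-1}\phi$ has nonzero diagonal, hence is invertible. Your boundary term $\sigma^2 x^{i-1}\phi(x;\mu,\sigma)$ in the $J_i$ recurrence (and $J_1=\mu J_0+\sigma^2\phi$) is the correct one, whereas the paper's appendix writes $\sigma$ where $\sigma^2$ belongs; as you observe, this is immaterial to the argument, since only the nonvanishing of the diagonal is used.
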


During delay propagation, we need to compute the sum, subtraction, maximum and minimum of two independent random variables $X_1$ and $X_2$ with known PDFs and CDFs. Since 
\[
X_1 - X_2 = X_1 + (-X_2), \quad\; \text{and} \quad\; \min(X_1, X_2) = -\max(-X_1, -X_2),
\]
the subtraction and minimum operations can be reduced to sum and max operations, respectively. These reductions only involves sign inversion and therefore preserves independence. Thus, we consider only the sum and max operations in the following discussion. Let $F_1$ and $F_2$ denote the CDF of $X_1$ and $X_2$. The CDF of $X_1 + X_2$ and $\max(X_1, X_2)$ can be respectively given by the following formulae:
\begin{align*}
    &F_{X_1 + X_2}(x) = \int_{-\infty}^{+\infty} F_1(x - t) f_2(t) \dt = \int_{-\infty}^{+\infty} f_1(t) F_2(x-t) \dt, \\
    &F_{\max(X_1, X_2)}(x) = F_1(x) F_2(x).
\end{align*}
Using the formulae above, we can numerically evaluate the CDF of 
\(
X\coloneqq X_1\odot X_2
\)
at any point on $\mathbb{R}$, where $\odot$ denotes either the sum or the max operation. 

In the intermediate segment, we can use interpolation methods to derive the piecewise polynomial approximation of the CDF.\@ In the tail segments, we aim to approximate the exact CDF of $X$ by a linear combination of $\{I_i(x;\mu,\sigma)\}_{i=0}^N$ or $\{J_i(x;\mu,\sigma)\}_{i=0}^N$. In particular, we focus on approximating the $L^2$-error and the $L^\infty$-error between the target CDF and the approximation. 
We present the derivation for the left tail below, while the right tail can be treated similarly and is shown in Appendix~\ref{AppendixA}. Also, only the max case is presented, as the sum case can be handled similarly.
The target functions can be respectively written as: 
\[
\int_{-\infty}^{q_0}\Big(F_1(x)F_2(x)-\sum\limits_{i=0}^N c_i^{(l)}I_i(x;\mu^{(l)},\sigma^{(l)})\Big)^2\dx
\qquad \text{or} \qquad 
F_1(x)F_2(x)-\sum\limits_{i=0}^N c_i^{(l)}I_i(x;\mu^{(l)},\sigma^{(l)}), \quad \forall x \leq q_0,
\]
where $\{c_i^{(l)}\}_{i=0}^N, \mu^{(l)}, \sigma^{(l)}$ are real coefficients, and $q_0$ is a left quantile point for $\max(X_1, X_2)$.

Take the left-tail $L^2$-error as an example, where a similar derivation holds for $L^\infty$-error. According to Lem.~\ref{lem:isomorphism}, we can rewrite the left-tail model CDF as 
\begin{equation}\label{eq:leftTailCDF}
    F(x) = \alpha^{(l)} \Phi(x;\mu^{(l)},\sigma^{(l)}) + \sum\limits_{i=0}^{N-1}\beta_i^{(l)} x^i\phi(x;\mu^{(l)},\sigma^{(l)}),
\end{equation}
where $\alpha^{(l)}$, $\{\beta_i^{(l)}\}_{i=0}^{N-1}$, $\mu^{(l)}$, $\sigma^{(l)}$ are the parameters to be decided. Thus, minimizing the $L^2$-error is equivalent to minimizing
\begin{equation}
\int_{-\infty}^{q_0}\Bigg(\Big(F_1(x)F_2(x)-\alpha^{(l)} \Phi(x;\mu^{(l)},\sigma^{(l)}) \Big)-\sum\limits_{i=0}^{N-1}\beta_i^{(l)} x^i\phi(x;\mu^{(l)},\sigma^{(l)})\Bigg)^2\dx.
\label{eq:leftTailL2Error}
\end{equation}
If the Gaussian parameters $\mu^{(l)}$ and $\sigma^{(l)}$ are fixed, the problem of minimizing $L^2$-error can be seen as a linear least-squares problem concerning the coefficients $\alpha^{(l)}$ and $\{\beta_i^{(l)}\}_{i = 0}^{N-1}$. 

\section{Model justification and main results}\label{sec:complete}
In this section, we present $L^2$- and pointwise convergence properties of the proposed model. Throughout the following discussion, we focus on the theoretical results concerning approxmiating $X = X_1\odot X_2$ with $\widetilde{X}$, where $X_k$, $k = 1, 2$ follow PDFs $f_k$ of the form~\eqref{eq:modelPDF}:
\begin{equation}\label{eq:X12PDF}
    f_k(t) = \left\{
        \begin{aligned}
            &\sum\limits_{j=0}^{N_k} c_{k,j}^{(l)}t^j\phi(t;\mu_k^{(l)},\sigma_k^{(l)}), && t \leq q_{k,0}, \\
            &P_{k,i}(t), && q_{k,i-1} \leq t < q_{k,i}, \;  i = 1,2, \dotsc, n_k, \\
            &\sum\limits_{j=0}^{N_k} c_{k,j}^{(r)}t^j\phi(t;\mu_k^{(r)},\sigma_k^{(r)}), && t \geq q_{k,n_k},
        \end{aligned}
        \right.
\end{equation}
and $\widetilde{X}$ also follows a PDF of the form~\eqref{eq:modelPDF}. In what follows, we restrict our attention to the case where the intermediate pieces $P_{k,i}$ are constants on each subinterval.
Before proceeding to justify our model, we introduce the class of random variables $\mathcal{X}$:
\begin{definition}\label{def:randomVariableSpace}
We denote by $\mathcal{X}$ the class of real-valued continuous random variables $X$
with CDF $F$ such that there exist $\varepsilon > 0$, $\delta > 0$, and points $x_1, x_2 \in \mathbb{R}$ satisfying 
\[
F(x_1) = \varepsilon, \qquad F(x_2) = 1 - \varepsilon,
\]
and 
\[
F \in L^2(-\infty,\, x_1 + \delta) \quad \text{and} \quad 1-F \in L^2(x_2 - \delta,\, +\infty).
\]
We refer to $\mathcal{X}$ as the \emph{class of random variables with $L^2$-integrable tails}.
\end{definition}

\subsection{Expressive power of the model}

We first consider any random variable in class $\mathcal{X}$, and prove that it can be approximated by our model. This result characterizes the expressive power of our model: before performing delay propagation on the DAG, every input random variable must be projected onto the model space, and the approximation ensures that this projection is well-defined.

\begin{theorem}[Expressive power of the model]\label{thm:modelExpressiveness}
    Suppose $X\in \mathcal{X}$ is a real-valued continuous random variable with cumulative distribution function $F$. Then there exists $\{f_{N,n}\}$, each element of which follows a PDF of the form~\eqref{eq:modelPDF}, such that
    \begin{align}
        &\lim_{n,N\to\infty}  \Big\Vert F(x) - \int_{-\infty}^{x}f_{N,n}(t)\dt\Big\Vert_2 = 0, \label{eq:L2limit} \\
        &\lim_{n,N\to\infty}  F(x) - \int_{-\infty}^{x}f_{N,n}(t)\dt= 0, \qquad \forall x \in \mathbb{R}. \label{eq:pointwiselimit}
    \end{align}
    Here $q_0$ and $q_n$ in~\eqref{eq:modelPDF} are two fixed parameters, and we take $x_1 = q_0$, $x_2 = q_n$ in Def.~\ref{def:randomVariableSpace}.
\end{theorem}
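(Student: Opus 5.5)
The plan is to build the approximating CDF $\widetilde F_{N,n}(x)=\int_{-\infty}^x f_{N,n}(t)\dt$ piece by piece on $(-\infty,q_0]$, $[q_0,q_n]$ and $[q_n,\infty)$. We then control the $L^2$-error and the pointwise error on each piece separately. Since the three pieces are disjoint up to endpoints, the global $L^2$-norm squared is the sum of the three local ones, and pointwise convergence is a local statement.

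\textbf{Left tail.} Fix $\mu^{(l)},\sigma^{(l)}$; taking $\mu^{(l)}=0,\sigma^{(l)}=1$ is harmless. Extend $F|_{(-\infty,q_0+\delta)}$ to a function $g\in L^2(\mathbb{R})$, for instance by multiplying $F$ by a smooth cutoff that equals $1$ on $(-\infty,q_0]$ and vanishes beyond $q_0+\delta$. This extension is possible because $X\in\mathcal{X}$ with $x_1=q_0$. Then apply Lemma~\ref{lem:HermtitepolyCompletenessL2param} to $g$ to get $\mathscr{P}_N^g\to g$ in $L^2$. The function $g$ is monotone near every point of $(-\infty,q_0]$ and hence of limited variation there, and it is continuous. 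So Lemma~\ref{lem:HermtitefuncCompletenessPointwise} gives $\mathscr{P}_N^g(x)\to F(x)$ for every $x\le q_0$. Each $\mathscr{P}_N^g$ is a polynomial of degree $N$ times $\e^{-(x-\mu)^2/(2\sigma^2)}$, i.e.\ it lies in $\mathrm{span}\{x^i\phi(x;\mu,\sigma)\}_{i\le N}$. This space is contained in the model CDF space of Lemma~\ref{lem:isomorphism}(1) with $N+1$ in place of $N$, taking $\alpha^{(l)}=0$. Differentiating, the corresponding PDF has the left-tail form in~\eqref{eq:modelPDF}. The right tail is handled symmetrically: apply the same argument to $1-F$ on $(q_n-\delta,\infty)$ and use Lemma~\ref{lem:isomorphism}(2).

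\textbf{Intermediate segment.} Take $P_i$ piecewise constant, equal to $(F(q_i)-F(q_{i-1}))/(q_i-q_{i-1})$. The CDF is then the piecewise linear interpolant of $F$ at the equispaced nodes, with the constant of integration matched at $q_0$. Because $F$ is continuous on the compact interval $[q_0,q_n]$, it is uniformly continuous there, so the interpolant converges uniformly as $n\to\infty$. This gives both pointwise and $L^2$ convergence on the bounded interval.

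\textbf{Gluing (main obstacle).} The model CDF is $\int_{-\infty}^x f$, so the middle piece starts from the value of the left-tail approximation at $q_0$ rather than from $F(q_0)$. Likewise, the right tail is determined as $1-$(right-tail integral), and this need not agree with the mass accumulated through $[q_0,q_n]$. I would proceed as follows. First, use pointwise convergence at $q_0$ to get $\mathscr{P}_N^g(q_0)\to F(q_0)$, so that the offset propagated into the middle segment tends to zero uniformly. Second, reconcile the total mass by absorbing the defect into the right-tail term. Concretely, write the right-tail CDF as $1-h_N(x)$ with $h_N\to 1-F$; the mismatch at $q_n$ then equals $h_N(q_n)-(1-F(q_n))$ plus the left defect, and both tend to $0$. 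Alternatively, adjust the last constant $P_n$ by this vanishing amount. This perturbs the middle piece by $o(1)$ uniformly on a bounded interval, so neither convergence statement is harmed. Finally, the joint limit in $n,N$ follows because the tail errors depend only on $N$ and the middle error is bounded by (interpolation error in $n$) $+$ (offsets in $N$). That combined bound tends to zero as $n,N\to\infty$ in any manner.
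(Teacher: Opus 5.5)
Your proposal is correct and follows essentially the same route as the paper. Both arguments expand the tails in Hermite functions, map the truncations into the model via Lemma~\ref{lem:isomorphism}, use the piecewise-linear interpolant of $F$ on $[q_0,q_n]$ controlled by uniform continuity, and glue the pieces with a vanishing correction. The differences are cosmetic: the paper absorbs both tail defects into the first and last subinterval constants, so the middle discrepancy lives on two cells of width $(q_n-q_0)/n$, whereas you carry the left offset across the segment and correct once at the end. Your smooth-cutoff extension and $N$-dependent coefficients are also slightly more careful than the paper's fixed-coefficient series $\sum_i\beta_i^{(l)}x^i\phi(x)$.
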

\begin{proof}
    We first use the Hermite polynomial theory to approximate the left tail and the right tail in a constructive manner. Without loss of generality, we assume that $\mu^{(l)} = 0$, $\sigma^{(l)} = 1$, $\mu^{(r)} = 0$, and $\sigma^{(r)} = 1$.  
    
    By Lem.~\ref{lem:HermtitepolyCompletenessL2param} and Lem.~\ref{lem:HermtitefuncCompletenessPointwise}, together with the preservation of $L^2$-integrability under finite sums, we obtain: 
    \[
    \left\{
    \begin{aligned}
    &F(x) - \alpha^{(l)} \Phi(x) \in L^2(-\infty, q_0+\delta) , &\qquad \forall \alpha^{(l)} \in \mathbb{R}, \\ 
    &1-F(x) - \alpha^{(r)} (1-\Phi(x)) \in L^2(q_n-\delta, +\infty), &\qquad \forall \alpha^{(r)} \in \mathbb{R}.
    \end{aligned}
    \right.
    \]
    Thus, we can approximate $F(x) - \alpha^{(l)}\Phi(x)$ on the left tail and $1-F(x) - \alpha^{(r)}(1-\Phi(x))$ on the right tail using the Hermite function expansion. 

    Consider the left-tail case. Due to the Hermite function expansion convergence results in Lem.~\ref{lem:HermtitepolyCompletenessL2param} and Lem.~\ref{lem:HermtitefuncCompletenessPointwise}, there exist coefficients \(\{\beta_i^{(l)}\}_{i = 0}^{+\infty}\) such that 
    \[
    \left\{
        \begin{aligned}
            &\lim_{N\to+\infty} \sum\limits_{i = 0}^{N-1} \beta_i^{(l)} x^i\phi(x) = F(x) - \alpha^{(l)} \Phi(x), \qquad \forall x \leq q_0, \\
            &\lim_{N\to+\infty} \lVert F(x) - \alpha^{(l)}\Phi(x) - \sum\limits_{i=0}^{N-1}\beta_i^{(l)} x^i \phi(x)\rVert_{L^2\left(-\infty, q_0\right]} = 0.
        \end{aligned}
    \right.
    \]
    We define $F_N^{(l)}(x) \coloneqq \alpha^{(l)}\Phi(x) + \sum\limits_{i=0}^{N-1}\beta_i^{(l)} x^i \phi(x)$ for $x \leq q_0$. Due to the isomorphism in Lem.~\ref{lem:isomorphism} between 
    \[
    \mathrm{span}\left\{ \int_{-\infty}^{x}\phi(t)\dt, \int_{-\infty}^{x}t\phi(t)\dt, \dotsc, \int_{-\infty}^{x}t^N\phi(t)\dt \right\}
    \] 
    and 
    \[
    \mathrm{span}\left\{\Phi(x), \phi(x), \dotsc, x^{N-1}\phi(x)\right\},
    \]
    we can map each $F_N^{(l)}(x)$ back to $f_N^{(l)}(x)$ that takes the left-tail form of~\eqref{eq:modelPDF} and satisfies \(F_N^{(l)}(x) = \int_{-\infty}^x f_N^{(l)}(t)\dt\). We can also get the right-tail expression for $f_N^{(r)}(x)$ in the same manner. 
    
    Thus, by defining 
    \[
        f_N(x) \coloneqq 
        \begin{cases}
            f_N^{(l)}(x), & x \in (-\infty, q_0], \\[6pt]
            f_N^{(r)}(x), & x \in [q_n, +\infty),
        \end{cases}
    \] 
    and 
    \begin{equation}\label{eq:fNx}
        F_N(x) \coloneqq 
        \begin{cases}
            \int_{-\infty}^x f_N^{(l)}(t)\dt, & x \in (-\infty, q_0], \\[6pt]
            1- \int_x^{+\infty} f_N^{(r)}(t)\dt, & x \in [q_n, +\infty),
        \end{cases}    
    \end{equation}
    we obtain $f_N(x)$ and $F_N(x)$ on $(-\infty, q_0] \cup [q_n, +\infty)$ that satisfy
    \[
        \begin{cases}
            \lim\limits_{N\to\infty}  \Big\Vert F(x) - F_N(x)\Big\Vert_{L^2(-\infty, q_0]\cup [q_n, +\infty)} = 0, \\
            \lim\limits_{N\to\infty}  F(x) - F_N(x) = 0, \qquad \forall x \in (-\infty, q_0]\cup [q_n, +\infty).
        \end{cases}    
    \]

    Next, we extend $f_N$ onto $(q_0,q_n)$ by defining $f_{N,n}$ for each $n > 1$ as
    \[
        f_{N,n}(x)=
        \begin{cases}
            f_N(x), & x\in(-\infty,q_0]\cup[q_n,+\infty),\\[6pt]
            \dfrac{F(q_{1})-F_N(q_{0})}{q_{1}-q_{0}}, & x\in(q_0,q_1),\\[10pt]
            \dfrac{F(q_{i+1})-F(q_{i})}{q_{i+1}-q_{i}}, & x\in[q_i,q_{i+1}),\ \ i=1,\dots,n-2,\\[10pt]
            \dfrac{F_N(q_{n})-F(q_{n-1})}{q_{n}-q_{n-1}}, & x\in[q_{n-1},q_n).
        \end{cases}
    \]
    Now we can finally define $F_{N,n}$ as a CDF-form function: 
    \begin{equation}\label{eq:fNnx}
        F_{N,n}(x) \coloneqq \int_{-\infty}^{x} f_{N,n}(t)\dt, \qquad x\in \mathbb{R}.
    \end{equation}
    Here we note that although the representations of Eq.~\eqref{eq:fNx} and Eq.~\eqref{eq:fNnx} on the right tail are different, we still have
    \[
        F_{N,n}(x) = 1 - \int_{x}^{+\infty} f_{N,n}(t)\dt, \qquad x \in [q_n,+\infty),
    \]
    since
    \[
    \int_{-\infty}^{q_n} f_{N,n}(t)\dt + \int_{q_n}^{+\infty} f_{N,n}(t)\dt = F_N(q_n) + \bigl(1-F_N(q_n)\bigr) = 1.
    \]
    Thus, the convergence results of~\eqref{eq:L2limit} and~\eqref{eq:pointwiselimit} restricted to $(-\infty, q_0]\cup [q_n, +\infty)$ are guaranteed, as 
    \[
    F_{N,n}(x) = F_N(x), \quad x\in (-\infty, q_0]\cup [q_n, +\infty).
    \]

    For the intermediate segment, we define the auxiliary functions
    \[
    \begin{aligned}
    h_n(t) &= \frac{F(q_i)- F(q_{i-1})}{q_i - q_{i-1}}, \qquad q_{i-1} \leq t < q_i,\quad i = 1,2, \dotsc, n,\\
    H_n(x) &= F(q_0) + \int_{q_0}^{x} h_n(t)\dt, \qquad x\in [q_0, q_n].
    \end{aligned}
    \]

    For the $L^2$-convergence in the intermediate segment, by the triangular inequality of the $L^2$-norm we have 
    \begin{align*}
        \lVert F_{N,n}- F \rVert_{L^2(q_0,q_n)} \leq \lVert F_{N,n}- H_n \rVert_{L^2(q_0,q_n)} 
        + \lVert H_n - F \rVert_{L^2(q_0,q_n)}.
    \end{align*}
    To estimate $\lVert F_{N,n}- H_n \rVert_{L^2(q_0,q_n)}$, we utilize the fact that 
    $F_{N,n}$ and $H_n$ differ only on the first and the last subintervals. Direct computation yields
    \begin{align*}
    \lVert F_{N,n}- H_n \rVert_{L^2(q_0,q_n)}^2 
    &= \frac{1}{3}(q_1 - q_0)\bigl(F(q_0)-F_{N}(q_0)\bigr)^2 
    + \frac{1}{3}(q_n - q_{n-1})\bigl(F(q_n)-F_{N}(q_n)\bigr)^2 \\
    &= \frac{1}{3n}(q_n - q_0)\bigl(F(q_0)-F_{N}(q_0)\bigr)^2 
    + \frac{1}{3n}(q_n - q_0)\bigl(F(q_n)-F_{N}(q_n)\bigr)^2.
    \end{align*}
    Thus, we have 
    \[
    \lim_{n \to \infty} 
    \lVert F_{N,n}- H_n \rVert_{L^2(q_0,q_n)} = 0.
    \]  

    The term $\lVert H_n - F \rVert_{L^2(q_0,q_n)}$ represents the $L^2$-error of approximating $F$ on $(q_0, q_n)$ by linear interpolation. Given that $F$ is an actual CDF, it is uniformly continuous on $[q_0, q_n]$. Accordingly, for arbitrary $\varepsilon > 0$, we can find $\delta > 0$, such that for all $x, y \in [q_0, q_n]$ satisfying $\lvert x - y\rvert < \delta$, we have $\lvert F(x) - F(y)\rvert < \varepsilon$. Choosing $n$ sufficiently large so that $\max_i (q_i-q_{i-1})<\delta$, we obtain 
    \[
    \lVert H_n - F \rVert_{L^2(q_0,q_n)} = \sum\limits_{i=1}^n \int_{q_{i-1}}^{q_i} \bigl| H_n(x) - F(x)\bigr|^2 \dx < \sum\limits_{i=1}^n \int_{q_{i-1}}^{q_i} \varepsilon^2 \dx = (q_n - q_0)\varepsilon^2,
    \] 
    which implies that 
    \[
    \lim\limits_{n\to+\infty} \lVert H_n - F \rVert_{L^2(q_0,q_n)} = 0.
    \]
    Consequently, we obtain
    \[
        \lim\limits_{n,N\to\infty}  \lVert F - F_{N,n}\rVert_{L^2(q_0,q_n)} = 0,
    \]

    For the pointwise convergence property, we again invoke the auxiliary function $H_n$ and apply the triangle inequality: 
    \[
        |F_{N,n}(x) - F(x)| \leq |F_{N,n}(x) - H_n(x)| + |H_n(x) - F(x)|, \quad \forall x\in (q_0, q_n).
    \]
    Here we can also utilize the absolute continuity of $F$ to guarantee that
    \[
    \lim\limits_{n\to+\infty} |H_n(x) - F(x)| = 0, \quad \forall x\in (q_0, q_n).
    \]
    The pointwise convergence of $F_{N,n}$ to $H_n$ also follows from the fact that the discrepancy between $F_{N,n}(x)$ and $H_n(x)$ occurs only on the first and last subintervals, whose lengths vanish as $n\to +\infty$. Thus, for any fixed $x\in (q_0, q_n)$, we may choose $n$ sufficiently large so that $x$ lies in one of the interior subintervals.
    Putting these together, we obtain the pointwise convergence of $F_{N,n}$ to $F$ on $(q_0, q_n)$.
\end{proof}
Notably, most commonly used distributions fall within the class $\mathcal{X}$.\@ Typical examples include the Gaussian distribution, the exponential and gamma families, the lognormal distribution and the log-skew-normal distribution.

\subsection{Propagation capability of the model}

Our model is designed not only to approximate individual random variables, but also to preserve their essential characteristics under propagation through a DAG.\@ In particular, the model should remain stable under max and sum operations.

We begin by establishing results that justify the propagation capability in the $L^2$ sense. To avoid ambiguity, we denote by $F_k^{(l)}$ and $F_k^{(r)}$ the left-tail and right-tail CDFs of $X_k$ for $k=1,2$, respectively. Their expressions take the following form:

\begin{equation}\label{Eq:CDFtailform}
\left\{
\begin{aligned}
    F_k^{(l)}(x) &= \alpha_k^{(l)} \Phi\big(x; \mu_k^{(l)}, \sigma_k^{(l)}\big) + \sum\limits_{i=0}^{N_k - 1} \beta_{k,i}^{(l)} x^i \phi\big(x; \mu_k^{(l)}, \sigma_k^{(l)}\big), \\
    F_k^{(r)}(x) &= 1 - \alpha_k^{(r)} \left( 1 - \Phi\big(x; \mu_k^{(r)}, \sigma_k^{(r)}\big) \right) - \sum\limits_{i=0}^{N_k - 1} \beta_{k,i}^{(r)} x^i \phi\big(x; \mu_k^{(r)}, \sigma_k^{(r)}\big),
\end{aligned}
\right.    
\end{equation}
where $\alpha_k^{(l)}$, $\alpha_k^{(r)}$, $\beta_{k,i}^{(l)}$, and $\beta_{k,i}^{(r)}$ are real coefficients for $k = 1$, $2$ and $i = 0, 1, \dotsc, N_k - 1$. 

Let $F^{(l)}$ and $F^{(r)}$ denote the left-tail and right-tail CDFs of $X = X_1 \odot X_2$, respectively. For the sum case, $F^{(l)}$ and $F^{(r)}$ are both computed via convolution on the real line and thus share the same expression. Therefore, we omit the superscript $(l)$ and $(r)$ whenever no confusion arises.

Since the family $\{\e^{-x^2/2}x^n\}_{n\ge 0}$ forms a complete set in $L^2(\mathbb{R})$\cite[pp.~10, 108]{szeg1939orthogonal}, and we have established a constructive $L^2$-convergence result for the Hermite function expansion in Lem.~\ref{lem:HermtitepolyCompletenessL2param}, it suffices to establish the following lemma for $L^2$-justification of our tail model.

\begin{lemma}\label{lem:tailL2integrable}
    Suppose $X_1$, $X_2$ are two independent real-valued continuous random variables and have PDFs of the form~\eqref{eq:X12PDF}, and let $X = X_1 \odot X_2$ denote either their maximum or sum. Then for any $\delta > 0$, we have
    \begin{enumerate}[label=(\arabic*)]
        \item $F^{(l)}\in L^2(-\infty, q_0 + \delta)$, where $F^{(l)}$ denotes the left-tail CDF expression of $X$ that is naturally expanded from $(-\infty, q_0)$ onto $[q_0, q_0 + \delta)$.
        \item $1-F^{(r)}(x)\in L^2(q_n-\delta, +\infty)$, where $F^{(r)}$ denotes the right-tail CDF expression of $X$ that is naturally expanded from $(q_n, +\infty)$ onto $(q_n-\delta, q_n]$.
    \end{enumerate}
    In particular, $X\in \mathcal{X}$.
\end{lemma}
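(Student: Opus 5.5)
The plan is to bound the tails of $X$ directly by Gaussian-type envelopes built from the tails of $X_1$ and $X_2$. Square-integrability near $q_0+\delta$ or $q_n-\delta$ is automatic: $F$ and $1-F$ take values in $[0,1]$, and the naturally extended tail expressions are continuous, hence bounded on bounded intervals. So the only real content is behaviour as $x\to\mp\infty$. First I record what the form~\eqref{Eq:CDFtailform} gives. Each tail of $F_k$ is $\alpha\Phi$ plus a polynomial times $\phi$. Since $\Phi(x;\mu,\sigma)\le \phi(x;\mu,\sigma)\,\sigma^2/|x-\mu|$ for $x<\mu$ (Mills ratio), there are constants $C_k,\,a_k>0$ and polynomials $p_k$ with
\[
F_k(x)\le |p_k(x)|\,\e^{-a_k x^2}\quad (x\le q_{k,0}),
\]
and similarly for $1-F_k(x)$ with $x\ge q_{k,n_k}$. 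In particular $F_k\in L^2(-\infty,c)$ and $1-F_k\in L^2(c,\infty)$ for every $c$, and $F_k$ also has finite first moment.

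\textbf{Max case.} We have $F=F_1F_2\le \min(F_1,F_2)$, so $F\in L^2(-\infty,c)$ immediately. For the right tail,
\[
1-F_1F_2=(1-F_1)+F_1(1-F_2)\le (1-F_1)+(1-F_2),
\]
and both terms lie in $L^2(c,\infty)$. The extended expression $F^{(r)}$ agrees with $F$ for large $x$, so only the bounded-interval part differs, and that part was handled above.

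\textbf{Sum case.} Here I split the convolution. For $x\to-\infty$,
\[
F(x)=\int F_1(x-t)f_2(t)\dt
\le \int_{t>x/2}F_1(x-t)f_2(t)\dt+\int_{t\le x/2}f_2(t)\dt
\le F_1(x/2)+F_2(x/2),
\]
using monotonicity of $F_1$ in the first integral and $F_1\le 1$ in the second. Both bounds are Gaussian-decaying, so $F\in L^2(-\infty,c)$. The same argument with $1-F$ gives
\[
1-F(x)\le (1-F_1(x/2))+(1-F_2(x/2)).
\]

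\textbf{Conclusion and main obstacle.} Choosing $\varepsilon$ small and $x_1,x_2$ as the corresponding quantiles (continuity of $F$ follows from $X_k$ having densities) places $X\in\mathcal{X}$ as in Def.~\ref{def:randomVariableSpace}. The step I expect to need care is the interpretation of the "naturally extended" $F^{(l)},F^{(r)}$ beyond $q_0$ or $q_n$. For the sum these are literally $F$, since the convolution is valid on all of $\mathbb{R}$. For the max I would take the extension to be the product of the tail formulas of $F_1,F_2$, each of which is continuous and hence bounded on the compact overlap $[q_0,q_0+\delta]$. Either way, local boundedness settles it, and all decay estimates reduce to the Mills-ratio bound.
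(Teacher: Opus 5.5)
Your proposal is correct and follows essentially the same route as the paper. The ingredients match: Gaussian decay of the input tails, the union bound $F(x)\le F_1(x/2)+F_2(x/2)$ (and its right-tail analogue) for the sum, and elementary product bounds for the max. Your max-case bounds $F_1F_2\le\min(F_1,F_2)$ and $1-F_1F_2\le(1-F_1)+(1-F_2)$, together with the explicit Mills-ratio estimate, are a cleaner version of the paper's asymptotic argument, and you also spell out the local-boundedness and $\mathcal{X}$-membership steps that the paper leaves implicit. One small correction: the Mills-ratio estimate holds for $x$ sufficiently negative, not for all $x\le q_{k,0}$, and the remaining bounded range is covered by $F_k\le 1$.
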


\begin{proof}
    \medskip
    \noindent(1) \textbf{Max case.} As $x \to -\infty$, $F^{(l)}(x) = F_1^{(l)}(x)F_2^{(l)}(x)$. In this case, we have 
    \begin{align*}  
        F_1^{(l)}(x)F_2^{(l)}(x)
        = \prod_{k=1}^2\left[\alpha_k^{(l)} \Phi\big(x;\mu_k^{(l)},\sigma_k^{(l)}\big) + \sum\limits_{i=0}^{N_k - 1}\beta_{k,i}^{(l)} x^i\phi\big(x;\mu_k^{(l)},\sigma_k^{(l)}\big)\right],
    \end{align*}
    where 
    \begin{equation}\label{eq:CDFTailAsymp}
        \alpha_k^{(l)} \Phi\big(x;\mu_k^{(l)},\sigma_k^{(l)}\big) + \sum\limits_{i=0}^{N_1 - 1}\beta_{k,i}^{(l)} x^i\phi\big(x;\mu_k^{(l)},\sigma_k^{(l)}\big) 
        \sim \left(-\frac{\alpha_k^{(l)}\sigma_k^{(l)}}{x-\mu_k^{(l)}} + \sum\limits_{i=0}^{N_1 - 1}\beta_{k,i}^{(l)} x^i\right)\phi\big(x;\mu_k^{(l)},\sigma_k^{(l)}\big) ,          
    \end{equation}
    as $x \to -\infty,\, k = 1, 2$. As the decay of $\e^{-x^2}$ suppresses all rational contributions, the decay behavior of $F_1(x)F_2(x)$ is dominated by the Gaussian factor.

    \textbf{Sum case.} We utilize the following union-bound-type inequalities:
    \[
    \begin{cases}
        \mathbb{P}(X_1 + X_2 < x) \leq \mathbb{P}\!\left(X_1 < \tfrac{x}{2}\right) + \mathbb{P}\!\left(X_2 < \tfrac{x}{2}\right), \\
        \mathbb{P}(X_1 + X_2 > x) \leq \mathbb{P}\!\left(X_1 > \tfrac{x}{2}\right) + \mathbb{P}\!\left(X_2 > \tfrac{x}{2}\right).
    \end{cases}
    \]
    Note that $\mathbb{P}(X_i < \frac{x}{2})$ also shows Gaussian decay as $x \to -\infty$, $i = 1$, $2$, it follows that $F_{X_1 + X_2}(x)$ is $L^2$-integrable on the left tail.

    \medskip
    \noindent(2) For the max case, as $x \to +\infty$, $F^{(r)}(x) = F_1^{(r)}(x)F_2^{(r)}(x)$. In this case, we have
    \begin{align*}  
        1-F_1^{(r)}(x)F_2^{(r)}(x)
        = 1-\prod_{k=1}^2\bigg[1-\alpha_k^{(r)} \left(1-\Phi\big(x;\mu_k^{(r)},\sigma_k^{(r)}\big)\right) - \sum\limits_{i=0}^{N_k - 1}\beta_{k,i}^{(r)} x^i\phi\big(x;\mu_k^{(r)},\sigma_k^{(r)}\big)\bigg].
    \end{align*}   
    Similar to the left-tail case, as $\alpha_k^{(r)} \left(1-\Phi\big(x,\mu_k^{(r)},\sigma_k^{(r)}\big)\right) + \sum\limits_{i=0}^{N_k - 1}\beta_{k,i}^{(r)} x^i\phi\big(x,\mu_k^{(r)},\sigma_k^{(r)}\big)$ shows Gaussian decay for $k = 1$, $2$ as $x \to +\infty$, their product also shows Gaussian-type decay behavior. Thus, $1-F(x)$ is $L^2$-integrable around $+\infty$.

    The proof for the sum case on the right tail follows the same reasoning as in the left-tail analysis.
\end{proof}
 
By combining the above lemma with Thm.~\ref{thm:modelExpressiveness}, we obtain the following result.
\begin{theorem}[Propagation capability of the model]\label{thm:modelPropagationCompleteness}
Suppose $X_1$, $X_2$ are independent real-valued continuous random variables and have PDFs of the form~\eqref{eq:X12PDF}, and let $X = X_1 \odot X_2$ denote either their maximum or sum, and write $F$ for the cumulative distribution function of $X$.
Then there exists $\{f_{N,n}\}$, each element of which follows a PDF of the form~\eqref{eq:modelPDF}, such that
    \begin{align*}
        &\lim_{n,N\to\infty}  \Big\Vert F(x) - \int_{-\infty}^{x}f_{N,n}(t)\dt\Big\Vert_{L^2(\mathbb{R})} = 0, \\
        &\lim_{n,N\to\infty}  \Bigl(F(x) - \int_{-\infty}^{x}f_{N,n}(t)\dt\Bigr)= 0, \qquad \forall x \in \mathbb{R}. 
    \end{align*}
\end{theorem}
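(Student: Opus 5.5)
The plan is to reduce Thm.~\ref{thm:modelPropagationCompleteness} to Thm.~\ref{thm:modelExpressiveness}. That theorem already delivers both the $L^2$ and the pointwise conclusions for any member of $\mathcal{X}$. So it suffices to show that $X = X_1\odot X_2$ is a real-valued continuous random variable lying in $\mathcal{X}$, with the points $x_1 = q_0$ and $x_2 = q_n$ chosen as the prescribed quantiles of $X$. The membership $X\in\mathcal{X}$ is exactly the last assertion of Lem.~\ref{lem:tailL2integrable}, so most of the work is bookkeeping. The sequence $\{f_{N,n}\}$ will be taken to be the one constructed in the proof of Thm.~\ref{thm:modelExpressiveness}, applied to $F$.

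The steps, in order, are as follows.

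First, I check that $X$ is continuous. For the max case, $F=F_1F_2$ is a product of continuous CDFs. For the sum case, $F_{X_1+X_2}(x)=\int F_1(x-t)f_2(t)\dt$ is continuous by dominated convergence, since $0\le F_1\le 1$ and $f_2\in L^1$. This also gives $F$ strictly between $0$ and $1$ at the chosen levels, so $q_0<q_n$ exist with $F(q_0)=\varepsilon$ and $F(q_n)=1-\varepsilon$. If $F$ has flat pieces, I would simply take any preimage, which is harmless.

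Second, I invoke Lem.~\ref{lem:tailL2integrable} with an arbitrary $\delta>0$ to obtain $F\in L^2(-\infty,q_0+\delta)$ and $1-F\in L^2(q_n-\delta,+\infty)$. One subtlety needs care here. The lemma is stated for the "tail expressions" $F^{(l)}$ and $F^{(r)}$ extended naturally past $q_0$ and $q_n$, whereas Def.~\ref{def:randomVariableSpace} concerns the true $F$. I would argue that these agree up to a bounded modification on a finite interval. Concretely, on $(-\infty,q_0+\delta)$ the difference between $F$ and the Gaussian-decaying bound is only over a compact piece where $F$ is bounded by $1$. So $L^2$-integrability of $F$ itself follows from the Gaussian tail decay as $x\to-\infty$ (from \eqref{eq:CDFTailAsymp} or the union bound) plus boundedness on compacts, and similarly for $1-F$.

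Third, I apply Thm.~\ref{thm:modelExpressiveness} to $X$ to obtain $f_{N,n}$ of the form \eqref{eq:modelPDF}, with pointwise convergence on all of $\mathbb{R}$ and $L^2$ convergence on each of the three segments $(-\infty,q_0]$, $(q_0,q_n)$ and $[q_n,\infty)$. Summing the three squared norms then gives convergence in $L^2(\mathbb{R})$.

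I expect the main obstacle to be making the global $L^2(\mathbb{R})$ statement meaningful. On the right tail, $F$ itself is not square integrable, since it tends to $1$. The tail approximation in Thm.~\ref{thm:modelExpressiveness} is really done for $1-F$, and the model CDF is written as $1-\int_x^\infty f_{N,n}$. The identity $F_{N,n}=1-\int_x^\infty f_{N,n}$ for $x\ge q_n$, already verified in that proof, is what makes $F-F_{N,n}=(1-F_{N,n})-(1-F)$ a difference of two $L^2$ functions there. I would therefore spell out that the norm in the statement is applied to the difference $F-\int_{-\infty}^x f_{N,n}$, which lies in $L^2(\mathbb{R})$, and that its right-tail part equals the Hermite-expansion error for $1-F-\alpha^{(r)}(1-\Phi)$, which vanishes by Lem.~\ref{lem:HermtitepolyCompletenessL2param}.
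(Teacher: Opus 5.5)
Your proposal is correct and follows the paper's own route. The paper proves this theorem in one line, by combining Lem.~\ref{lem:tailL2integrable} (which gives $X\in\mathcal{X}$) with Thm.~\ref{thm:modelExpressiveness}. Your additional checks only make explicit what the paper leaves implicit: continuity of $F$ and existence of $q_0<q_n$, reconciling the lemma's tail expressions $F^{(l)},F^{(r)}$ with the true $F$, and noting that on $[q_n,\infty)$ the error $F-F_{N,n}$ is exactly the Hermite-expansion error for $1-F-\alpha^{(r)}(1-\Phi)$.
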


\section{Algorithmic framework for delay graph propagation}\label{sec:imp}
In this section, we describe the algorithmic framework related to our delay model for propagating random delays on DAGs. As was discussed in Sec.~\ref{sec:model}, each random variable to be propagated, whose PDF follows the form~\eqref{eq:modelPDF} with a piecewise constant intermediate segment, can be represented by the following information: 
\begin{enumerate}[label=(\arabic*)]
    \item $\bigl\{c_j^{(l)}\bigr\}_{j=0}^N$, $\mu^{(l)}$, $\sigma^{(l)}$: the left-tail parameters,
    \item $\bigl\{\bigl(p_i, F(p_i)\bigr)\bigr\}_{i=0}^n$: the sample points and their CDF values in the intermediate segment,
    \item $\bigl\{c_j^{(r)}\bigr\}_{j=0}^N$, $\mu^{(r)}$, $\sigma^{(r)}$: the right-tail parameters.
\end{enumerate}

Given a DAG $G = (V, E)$, its topological order, and the incremental delay distribution at each node and along each edge, we aim to compute the arrival time distribution at each node. We assume that $N$ and $n$ are prescribed and remain fixed throughout the propagation. The incremental delays at each node and along each edge are also assumed to have pregiven analytic PDFs and CDFs that can be evaluated on each point on the real line.

Two main algorithmic components are involved in the propagation process:
\begin{enumerate}
    \item \textbf{CDF point sampling.} 
    We need to evaluate the CDF values at arbitrary points on the real line as well as compute the quantile values $q_0$ and $q_n$. This is not only required when we obtain the CDF sample points in the intermediate segment, but also when approximating the tail parameters.
    \item \textbf{Tail parameter fitting.}
    We employ the Levenberg--Marquardt (LM) algorithm~\cite{Marquardt1963LM} to fit the tail parameters based on the CDF values at several reference points. 
\end{enumerate}

In the delay propagation process, there are two scenarios where the two algorithms above are applied: 
\begin{enumerate}
    \item \textbf{Input projection.} 
    When a random variable $X$ is first introduced into the graph, it must be projected onto our model form. In this case, the analytic forms of its PDF and CDF are known a priori, allowing us to evaluate the CDF at any point on the real line directly.
    \item \textbf{Binary operation.}
    During propagation, max and sum operations are applied to pairs of random variables. This is carried out in a \textit{stage} manner. A \textit{stage} in the STA process typically refers to the timing arc from the input pin of a cell to the input pin of one of its fanout cells, which includes a cell delay arc and an interconnect delay arc. In this work, we adopt the simplified model in which all input-to-output paths of a cell share the same delay, i.e., a single node delay, although cases with distinct cell-delay arcs can be handled in the same way.
    As our model supports only binary operations, we here index the nodes and process in algebraic order by default. In practical scenarios, one may proceed in the order that the inputs arrive. 
\end{enumerate}

Notably, different initialization strategies are used for LM-based tail fitting in the two scenarios described above; these strategies are detailed in Subsec.~\ref{subsec:tailfitting}.

\subsection{CDF point sampling}
To determine the quantile values $q_0$ and $q_n$, we first apply the adaptive bracket expansion algorithm~\cite[Chapter~9]{press1989numerical}. Starting from an initial guess $x_\mathrm{init}$, we evaluate the CDF value $F(x_\mathrm{init})$ and determine whether the target CDF lies to its left or right. The interval is then expanded in the corresponding direction with an exponentially increasing step size until the function values at the two endpoints bracket the desired value. Once such a pair is identified, the standard bisection method is applied. After the interval $[l, r]$ becomes sufficiently small, linear interpolation is used to obtain the final solution. The full procedure is summarized in Alg.~\ref{alg:bracket-bisect}. 

\begin{algorithm}[ht]
\caption{Adaptive bracket expansion and bisection for computing quantile values}
\label{alg:bracket-bisect}
\begin{algorithmic}[1]
\Require CDF $F(\cdot)$, target CDF value $y\in(0,1)$, initial guess $x_0$, initial step size $\Delta x>0$, expansion factor $\eta$, tolerance $\varepsilon>0$
\Ensure Approximate solution $x$ satisfying $\lvert F(x)-y\rvert \le \varepsilon$

\State $F_0 \gets F(x_0)$
\If{$F_0 = y$}
    \State \Return $x_0$
\EndIf
\State $s \gets \mathrm{sign}(y-F_0)$
\Repeat
    \State $x_1 \gets x_0 + s\,\Delta x$, \ $F_1 \gets F(x_1)$
    \If{$(F_1-y)(F_0-y) > 0$}
        \State $x_0 \gets x_1$, \ $F_0 \gets F_1$, \ $\Delta x \gets \eta\,\Delta x$
    \EndIf
\Until{$(F_1-y)(F_0-y) \le 0$}

\State $l \gets \min(x_0,x_1)$, \ $r \gets \max(x_0,x_1)$
\While{$r-l>\varepsilon$}
    \State $m \gets (l+r)/2$
    \If{$F(m)\le y$}
        \State $l \gets m$
    \Else
        \State $r \gets m$
    \EndIf
\EndWhile

\State \Return $\dfrac{(F(r)-y)\,l + (y-F(l))\,r}{F(r)-F(l)}$
\end{algorithmic}
\end{algorithm}

After obtaining $q_0$ and $q_n$, we uniformly sample $n-1$ more points in $[q_0, q_n]$ and evaluate the CDF values at these points. The full procedure is summarized in Alg.~\ref{alg:CDFpoints}.

If a closed-form expression for the input distribution is available and enables direct evaluation of quantiles, this sampling step can be omitted. For example, in many practical settings, delays are modeled as Gaussian or lognormal random variables, for which quantiles can be computed analytically. Moreover, once a random variable has been projected onto our model, one may also recover a parametric distribution using moment matching based on numerically estimated moments.

When evaluating the CDF at the desired points, two different scenarios arise. If the distribution admits a known analytic expression, the values can be computed directly. Otherwise, numerical evaluation is required. In our framework, the max operation falls into the former category, whereas the sum operation belongs to the latter. For the numerical evaluation of the convolution integral in the sum case, the integral is decomposed into three regions: the left tail, the intermediate segment, and the right tail.
In each region, rather than relying entirely on general-purpose numerical integrators such as GSL's \texttt{qagi}~\cite{gough2009gnu} or Boost's \texttt{sinh\_sinh}, we exploit the polynomial-Gaussian tail structure and the piecewise linear approximation of the CDF in the intermediate segment to obtain closed-form expressions whenever possible, and resort to numerical quadrature methods only for the remaining parts.

\begin{algorithm}[ht]
\caption{CDF point sampling}
\label{alg:CDFpoints}
\begin{algorithmic}[1]
\Require Target CDF values $y_0$, $y_n$, number of CDF points $n$, the CDF mapping $F$ 
\Ensure $\{\bigl(q_i, F(q_i)\bigr)\}_{i=0}^n$

\State Apply Alg.~\ref{alg:bracket-bisect} to find $q_0$ and $q_n$ such that $F(q_0) = y_0$, $F(q_n) = y_n$

\For{$i = 1$ to $n-1$}
    \State $q_i \gets q_0 + \frac{i}{n} (q_n-q_0)$
    \State Compute $F(q_i)$ according to the analytic or numerical method
\EndFor

\end{algorithmic}
\end{algorithm}
\subsection{Tail parameter fitting}\label{subsec:tailfitting}
After obtaining the CDF values at the reference points, we proceed to fit the tail parameters. If the input distribution is Gaussian, we directly set the tail parameters accordingly and skip the fitting step.

Although we have already illustrated the convergence properties of the Hermite function expansion for our model in Sec.~\ref{sec:complete}, we do not adopt this expansion in practice. The reasons are twofold. First, computing Hermite coefficients is numerically expensive, especially for the sum operation, which requires evaluating integrals over the entire real line. Second, the intermediate segment lacks sufficient smoothness to guarantee any convergence rate estimate, potentially leading to slow convergence in practice.

For a general tail-fitting problem, the determination of the tail parameters is formulated as a nonlinear least-squares (NLS) problem and we solve it by the Levenberg--Marquardt (LM) algorithm.
In the following discussion, we focus on the left-tail case; the corresponding right-tail procedure is presented in Appendix~\ref{AppendixA}. Let $\{x_i, y_i\}_{i=1}^k$ denote the reference CDF points, where $y_i = F(x_i)$. These points do not necessarily satisfy $x_i < q_0$ for all $i=1, \dotsc, k$. The CDF values $y_i$ at these points can be numerically computed. The selection of reference points is flexible in practice. In our implementation, we choose $k$ equally spaced points in the interval $[F^{-1}(q_0/4), F^{-1}(2q_0)]$ for the left tail, where $F^{-1}$ denotes the quantile function of the target distribution. The right tail reference points are chosen in a similar manner. The target function in the LM algorithm is set as 
\begin{equation}\label{eq:LMTargetFunc}
    \min \sum_{i=1}^k \biggl[ \Bigl( \widetilde{F}\bigl(x_i; \{c_j^{(l)}\}_{j=0}^N, \mu^{(l)}, \sigma^{(l)}\bigr) - y_i \Bigr) \Big/ y_i \biggr]^2 + \lambda \bigl(c_0^{(l)}-1\bigr)^2 + \lambda\biggl( \sum\limits_{j = 1}^N \bigl(c_j^{(l)}\bigr)^2\biggr),
\end{equation}
where the function $\widetilde{F}$ is the approximate left-tail CDF  that can be directly computed from the parameters $\{c_j^{(l)}\}_{j=0}^N$, $\mu^{(l)}$, and $\sigma^{(l)}$.

To balance the contributions of reference points across the tail region, we adopt the relative error formulation in Eq.~\eqref{eq:LMTargetFunc}, which assigns comparable importance to points near the tail and those moderately farther away. A regularization term is introduced to prevent overfitting. Without it, the optimization may produce excessively large polynomial coefficients, which in turn can make the propagation process unstable.

Since the Jacobian matrix $J$ can be directly derived and evaluated in our model, we can utilize the LM algorithm, which solves 
\[(J^T J + \lambda_{\mathrm{LM}} I) \delta x = -J^T r,\]
where $\lambda_{\mathrm{LM}}$ is a damping factor, $I$ is the identity matrix and $r$ is the residual vector defined by 
\[
r_i = 
\begin{cases}
\displaystyle \frac{y_i - \widetilde{F}\bigl(x_i; \{c_j^{(l)}\}_{j=0}^N, \mu^{(l)}, \sigma^{(l)}\bigr)}{y_i}, & i = 1, \dotsc, k, \\
\displaystyle \sqrt{\lambda} \, c_{i - k - 1}^{(l)}, & i = k+1, \dotsc, k + N + 1.
\end{cases}
\]

To impose the constraint that $\sigma^{(l)} > 0$, we reparameterize the scale as $p_\sigma = \log(\sigma^{(l)})$. This transformation keeps the objective and residual structure intact but alters the Jacobian accordingly.

As an illustrative example, consider the case $N = 2$. For simplicity, superscripts are omitted and the left-tail PDF is written as 
\begin{equation}\label{eq:orderTwoEg}
\tilde{f}(x) = (\alpha + \beta x + \gamma x^2)\phi(x;\mu,\sigma).
\end{equation} 
The corresponding left-tail CDF is given by
\[
\widetilde{F}(x) = \bigl(\beta \mu + \alpha + \gamma (\mu^2 + \sigma^2)\bigr) \, \Phi\left( x;\mu,\sigma \right) - (\beta + \gamma x + \gamma \mu) \sigma^2 \, \phi\left( x;\mu,\sigma \right).
\]
The scale parameter is reparameterized as $\sigma = \exp(p_\sigma)$ to ensure positivity during optimization. Correspondingly, the optimization variable is $p_\sigma = \log(\sigma)$.

Let $\Phi_i = \Phi(x_i; \mu, \sigma)$, $\phi_i = \phi(x_i; \mu, \sigma)$ and $z_i = (x_i-\mu)/\sigma$, then the $i$-th row of the Jacobian matrix $J$ is given by
\[
J(i,:) = -\frac{1}{y_i}
\begin{bmatrix}
\mu \, \Phi_i - \sigma^2 \, \phi_i \\
\Phi_i \\
(\mu^2 + \sigma^2) \Phi_i - (x_i + \mu)\sigma \phi_i \\
(\beta + 2\gamma \mu) \Phi_i
- \left(
\alpha + \beta \mu + \gamma(\mu^2 + \sigma^2)
+ \gamma \sigma^2
+ \frac{(x_i - \mu)(\beta + \gamma(x_i + \mu))}{\sigma}
\right) \sigma\phi_i \\
\sigma \left[
2\gamma \Phi_i +
\left(
-(\beta + \gamma(x_i + \mu))(z_i^2 + 1)
- \frac{z_i}{\sigma}(\beta \mu + \alpha + \gamma(\mu^2 + \sigma^2))
\right) \sigma\phi_i
\right]
\end{bmatrix}^\top, 
\]
for $i = 1, \dotsc, k$. 

The regularization term contributes $N+1$ additional rows to the Jacobian matrix. For $N = 2$, the last three rows are
\begin{align*}
J(k+1,:) &= \begin{bmatrix} \sqrt{\lambda}, & 0, & 0, & 0, & 0 \end{bmatrix}, \quad\\
J(k+2,:) &= \begin{bmatrix} 0, & \sqrt{\lambda}, & 0, & 0, & 0 \end{bmatrix}, \quad\\
J(k+3,:) &= \begin{bmatrix} 0, & 0, & \sqrt{\lambda}, & 0, & 0 \end{bmatrix}.
\end{align*}
This ensures that minimizing the total squared residual norm corresponds exactly to the regularized objective function in Eq.~\eqref{eq:LMTargetFunc}.

Putting everything together, we obtain Alg.~\ref{alg:tail-fitting} for fitting the tail parameters.
\begin{algorithm}[htbp]
\caption{Tail parameter fitting via the Levenberg--Marquardt algorithm}
\label{alg:tail-fitting}
\begin{algorithmic}[1]
\Require Reference points $\{(x_i, y_i)\}_{i=1}^k$, polynomial degree $N$, regularization weight $\lambda$
\Ensure Fitted parameters $x = \left( c_0,\, c_1,\, \dotsc,\, c_N,\, \mu,\, \sigma \right)^\top$

\State Initialize $\mu$, $\sigma$ depending on the operation type and tail side
\State Reparameterize $\sigma$ as $p_\sigma = \log(\sigma)$
\State Initialize coefficients $c_0, c_1, \dots, c_N$.

\While{not converged}
    \For{$i = 1$ to $k$}
        \State Compute residual $r_i = \dfrac{\tilde{F}(x_i) - y_i}{y_i}$
    \EndFor

    \For{$j = 0$ to $N$}
        \State Add regularization residual $r_{k + j + 1} = \sqrt{\lambda} \cdot c_j$
    \EndFor
    \State Compute the Jacobian matrix $J$
    \State Solve LM step: $(J^\top J + \lambda_{\mathrm{LM}} I)\, \delta x = -J^\top r$
    \State Update $x \gets x + \delta x$, with $\sigma = \exp(p_\sigma)$ and $\lambda_{\mathrm{LM}}$
\EndWhile
\end{algorithmic}
\end{algorithm}

It is noteworthy that we do not impose the constraint that the CDF values at the quantile points $q_0$ and $q_n$ must remain fixed when formulating the NLS problem. Since our primary objective is to achieve an accurate overall fit across the tail region, allowing small deviations at $q_0$ and $q_n$ has negligible impact on the resulting approximation.

As the LM algorithm only ensures local convergence, we aim to provide a good initialization to achieve reliable and stable optimization performance. 

During input projection, the tail parameters are initialized by fitting either the left or right tail to a Gaussian via least squares. Given sample points $\{(x_i, y_i)\}_{i= 1}^{k}$, we compute $z_i = \Phi^{-1}(y_i)$ and then fit $x_i \approx \mu + \sigma z_i$ by solving 
\[
\min_{\mu, \sigma} \sum\limits_{i = 1}^k (x_i - \mu - \sigma z_i)^2,
\]
which admits the following closed-form solution:
\[
\left\{
\begin{aligned}
\sigma &= \frac{\sum_{i=1}^k (z_i-\bar z)(x_i-\bar x)}{\sum_{i=1}^k (z_i-\bar z)^2},\\
\mu &= \bar x - \sigma\,\bar z,
\end{aligned}
\right.
\qquad
\text{where}\quad
\bar x=\frac1k\sum_{i=1}^k x_i,\;
\bar z=\frac1k\sum_{i=1}^k z_i.
\]

The initialization of the tail parameters in binary operations differs from that used in input projection. Since the target distribution is now obtained through either a max or a sum operation applied to random variables of a prescribed form, additional information about the asymptotic behavior of the tails becomes available. This allows us to devise more informed and effective initialization strategies.

When initializing for the sum operation, we utilize the closure of Gaussian random variables under sum operations. For the left tail, we initialize $\mu$ and $\sigma$ in Eq.~\eqref{eq:orderTwoEg} as
\[
\mu = \mu_1^{(l)} + \mu_2^{(l)}, \quad \sigma^2 = \bigl(\sigma_1^{(l)}\bigr)^2 + \bigl(\sigma_2^{(l)}\bigr)^2.
\]
The same initialization strategy is used for the right tail.

When initializing for the max operation, it is important to observe that the condition $F_1F_2\to 0$ does not necessarily imply that both $F_1$ and $F_2$ are simultaneously small.
Hence, the initialization of the left tail should distinguish between the following two cases:
\begin{enumerate}
\item If the values of $F_1$ and $F_2$ differ significantly at the chosen reference points, one of them may remain of non-negligible magnitude even when the other approaches zero. In this case, the left-tail initialization is taken from the random variable whose CDF values are smaller at those points.

\item If the values of $F_1$ and $F_2$ are comparable at the reference points, we assume that both $F_1$ and $F_2$ show asymptotic behaviors.
The left-tail exact PDF of the sum should satisfy: 
\[
    f^{(l)}(x) = F_1^{(l)}(x)f_2^{(l)}(x) + f_1^{(l)}(x)F_2^{(l)}(x). 
\]
Suppose that the input PDFs are of the form 
$
f_i^{(l)}(x) = p_i(x)\phi(x;\mu_{i}^{(l)}, \sigma_{i}^{(l)})$, where $ i = 1, 2,$ and $p_i(x)$ are polynomials. As $x\to -\infty$, we can see from Eq.~\eqref{eq:CDFTailAsymp} that $F_1^{(l)}$ and $F_2^{(l)}$ can both be seen as the similar form, since the fraction term in the bracket shows a decay behavior as $x\to -\infty$ and can be omitted. Thus we write 
\[
F_i^{(l)}(x) \sim q_i(x)\phi(x;\mu_{i}^{(l)}, \sigma_{i}^{(l)}), \quad i = 1, 2,\quad \mathrm{as}\; x\to -\infty,
\]
for some polynomials $q_i(x)$. Completing the square in the quadratic exponent of the product $\phi(x;\mu_{1}^{(l)}, \sigma_{1}^{(l)})\phi(x;\mu_{2}^{(l)}, \sigma_{2}^{(l)})$ yields the initialization
\[
\left\{
\begin{aligned}
\mu^{(l)} &= \Bigl( \tfrac{\mu_1^{(l)}}{(\sigma_1^{(l)})^2} + \tfrac{\mu_2^{(l)}}{(\sigma_2^{(l)})^2} \Bigr)(\sigma^{(l)})^2, \\
\sigma^{(l)} &= \Bigl( (\sigma_1^{(l)})^{-2} + (\sigma_2^{(l)})^{-2} \Bigr)^{-1/2}.
\end{aligned}
\right.
\]
\end{enumerate}
We adopt the following heuristic criterion for determining when the values of $F_1$ and $F_2$ ``differ significantly'' as: 
\[
F_1(q_{1,0}) > F_2(q_{2,n}) \quad\text{or}\quad
F_2(q_{2,0}) > F_1(q_{1,n}).
\]

Right-tail initialization is comparatively simpler, because both $F_1$ and $F_2$ are close to $1$ when $F_1\cdot F_2\to 1$. The right-tail PDF behavior of $\max\{X_1, X_2\}$ is dominated by the component of the larger $\sigma^{(r)}$ value, whose decay rate is slower. 
In practice, however, the reference points used for tail fitting do not lie sufficiently far into the extreme tail. Consequently, the decaying behavior among these points may not fully reflect the asymptotic characteristics.

Therefore, rather than relying completely on the asymptotic behavior, we adopt a heuristic Gaussian-based initialization strategy as follows: 
\[
\left\{
\begin{aligned}
\mu^{(r)} &= \max\bigl(\mu^{(r)}_1,\ \mu^{(r)}_2\bigr), \\
\sigma^{(r)} &= \frac{1}{3}\left(\max\bigl\{\mu^{(r)}_1 + 3\sigma^{(r)}_1,\ \mu^{(r)}_2 + 3\sigma^{(r)}_2\bigr\} - \mu^{(r)}\right).
\end{aligned}
\right.
\]

After determining the initial values of $\mu$ and $\sigma$, the polynomial coefficients are initialized via a least-squares fitting method. Evaluating the PDF at the reference points gives the values $\{p(x_i)\}_{i=0}^k$, and the coefficients are obtained by applying \texttt{polyfit} to solve
\begin{equation}\label{eq:polyfitTargetFunc}
\min_{p\in P_N} \sum\limits_{i = 1}^k \left[p(x_i)-\dfrac{\tilde{f}(x_i)}{\phi(x_i;\mu,\sigma)}\right]^2.
\end{equation}

After each binary operation, the values of $\widetilde{F}$ at $q_0$ and $q_n$ may deviate from the target values $F(q_0)$ and $F(q_n)$. But we keep these values unchanged, as such deviations has limited impact in the propagation process and are consistent with our theoretical results.

\subsection{Overall workflow}\label{subsec:workflow}
We summarize below the overall workflow that integrates all components discussed in this section into a complete delay propagation procedure on a DAG.\@

Given a DAG and all the random variables representing node delays and edge delays, we first perform \emph{input projection} for each input random variable, converting its distribution into left-tail parameters, right-tail parameters, and CDF sample points in the intermediate segment. We then topologically sort the graph to determine the propagation order. Starting from the source node(s), propagation proceeds stage by stage along this order, during which the required binary operations are carried out. 

At each node, we first obtain its arrival time by combining the outputs of all its predecessor nodes through successive max operations. The node delay is then added via a single sum operation to produce the node's output delay. Finally, the corresponding edge delays are added when passing this output delay for further propagation. 

To complement this workflow, Fig.~\ref{fig:delay_stage} illustrates a complete propagation stage on a simple DAG consisting of two source nodes and one sink node. For each incoming edge $(i,3)$, the edge delay $d_{i,3}$ is first projected onto our model space, yielding $\theta_{i,3}$. 
The predecessor node delays $d_1$ and $d_2$ are also projected to $\theta_1$ and $\theta_2$. Each pair $(\theta_i, \theta_{i,3})$ is combined through a \texttt{sum} operation to produce the arrival distributions $\theta_{3,\mathrm{in}1}$ and $\theta_{3,\mathrm{in}2}$. 
A \texttt{max} operation merges these two candidates into $\theta_{3,\mathrm{in}}$, after which the node delay $d_3$ is projected to $\theta_3$ and added, giving the output distribution $\theta_{3,\mathrm{out}}$.

\begin{figure}[ht]
    \centering
    \begin{subfigure}[t]{0.4\textwidth}
        \centering
        \includegraphics[width=0.7\linewidth,height=4cm]{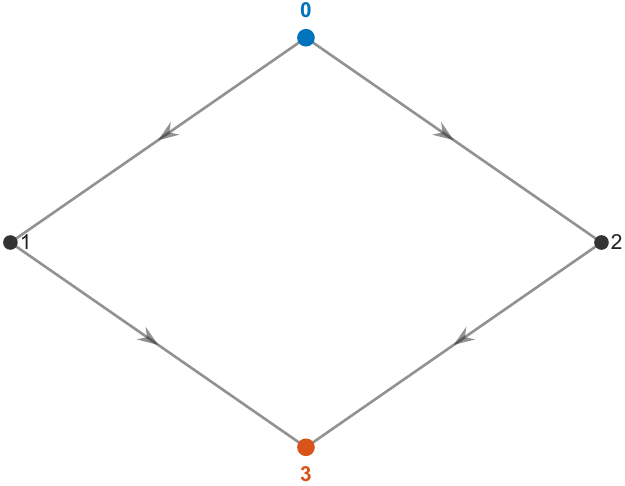}
        \caption{DAG topology}
        \label{fig:delay_stage_process}
    \end{subfigure}
    \hfill
    \begin{subfigure}[t]{0.58\textwidth}
        \centering
        \includegraphics[width=\linewidth,height=6cm]{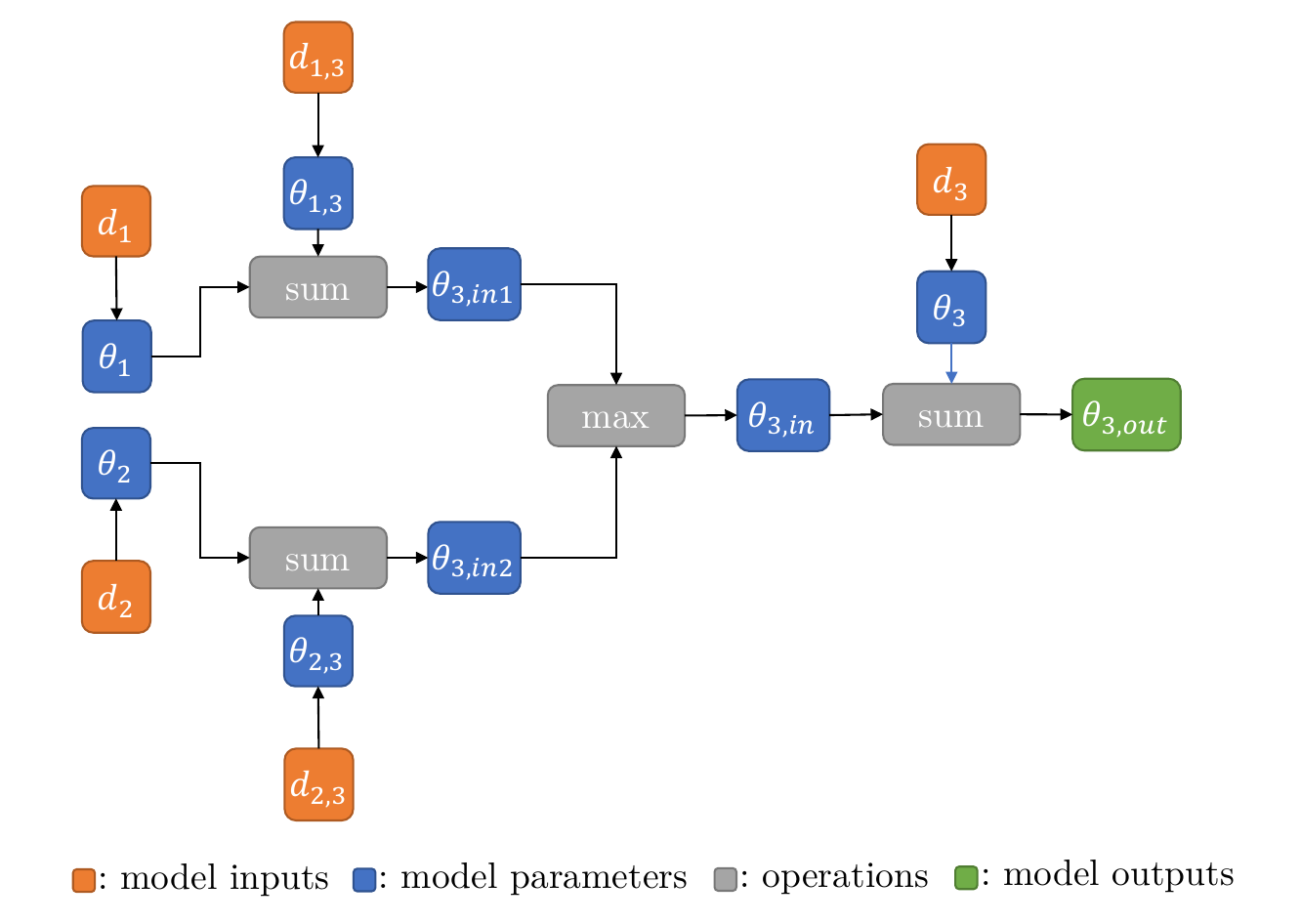}
        \caption{Delay propagation workflow}
        \label{fig:delay_stage_DAG}
    \end{subfigure}

    \caption{Illustration of delay propagation with a three-node DAG}
    \label{fig:delay_stage}
\end{figure}

\section{Numerical results}\label{sec:numeric}
In this section, we present numerical experiments to validate our proposed model and algorithms. The experiments are designed to assess both the approximation accuracy of the three-segment representation and its stability under repeated sum and max operations on DAGs. In particular, we focus on the accuracy of tail quantiles, which are of primary interest in STA.\@ 

\subsection{Experiment setup}
To evaluate the performance of our proposed algorithm in Sec.~\ref{sec:imp} and to assess the feasibility of our proposed model in Sec.~\ref{sec:model}, we conducted numerical experiments under different scenarios. Throughout all experiments, we set the number of CDF sample points $n = 100$, the degree of the tail polynomials $N = 2$. On each tail, $k = 21$ points are used for tail fitting, and the left and right quantiles are respectively set to $0.135\%$ and $99.865\%$, corresponding to the $\pm 3\sigma$ points of a standard Gaussian distribution. To focus on the behavior of the propagation algorithm itself, we adopt a simplified setting. Instead of letting delays at later stages depend on those of preceding stages, we assume that all node and edge delays in the graph are predetermined random variables. In other words, all delay information required for propagation is assumed to be available from the beginning.

We consider three experimental scenarios, each associated with a target distribution used as the primary reference for comparison. In graph propagation experiments, the distribution of the sink node is taken as the distribution of interest.
To quantitatively assess the accuracy of different methods, we compare the key quantiles of our desired distributions, including the $0.135\%$, $1\%$, $99\%$ and $99.865\%$ quantiles, among which the $0.135\%$ and $99.865\%$ quantiles are of particular importance. The quantile errors are defined as their relative error compared to the ground truth, i.e.,
\begin{equation}\label{eq:quantileError}
 \mathrm{Err}(q) 
    = \frac{\lvert q - q_{\mathrm{true}} \rvert}{q_{\mathrm{true}}},    
\end{equation}
where $q_{\mathrm{true}}$ denotes the ground-truth value.

For each scenario, we compare some of, if not all, the ground truth, the output of our propagation model, the Gaussian moment-matching results whose moments are derived from the ground truth, and an additional Gaussian-based approximation described below.

The additional baseline is a simple quantile-preserving method that provides a fast but crude delay-propagation approximation. For the sum operation, it exploits the closure of independent Gaussian random variables:
\[
\begin{cases}
\mu_{\mathrm{sum}} &= \mu_1 + \mu_2, \\
\sigma_{\mathrm{sum}} &= \sqrt{\sigma_1^2 + \sigma_2^2}.
\end{cases}
\]
For the max operation, this method computes
\[
\begin{cases}
    \mu_{\max} &= \max\{\mu_1, \mu_2\}, \\
    \sigma_{\max} &= \tfrac{1}{3}\bigl(\max\{\mu_1 + 3\sigma_1, \mu_2 + 3\sigma_2\} - \mu_{\max}\bigr).
\end{cases}
\]
We refer to this method as the \emph{Gaussian quantile-preserving method}.

For each case, the ground truth is obtained either by Monte Carlo simulation or directly from the analytical form of the underlying distribution class. Although our model is inherently CDF-oriented, we compare both the PDF and the CDF of the methods to provide a more comprehensive evaluation.

To ensure the effectiveness of our proposed model, we compare it against two alternative tail construction strategies, while retaining the same representation in the intermediate segment. 
The first strategy defines the left-tail PDF as
\[
f(x) = 
\begin{cases}
    0, &\quad x < \dfrac{F(q_1)q_0 - F(q_0)q_1}{F(q_1) - F(q_0)},\\[1.2ex]
    \dfrac{F(q_1) - F(q_0)}{q_1 - q_0}, &\quad \dfrac{F(q_1)q_0 - F(q_0)q_1}{F(q_1) - F(q_0)} \leq x < q_0.
\end{cases}
\]
This construction corresponds to linearly extrapolating the CDF on $[q_0, q_1]$ towards $-\infty$ until the extrapolated line reaches $0$. Similarly, the right tail is constructed by linearly extending the CDF on $[q_{n-1}, q_n]$ towards $+\infty$ until it reaches $1$.
We refer to this approach as the \emph{extrapolation method}. 

The second strategy adopts a different idea by introducing two fixed sentinel points $q_{-1}$ and $q_{n+1}$, where $F(q_{-1}) = 0$ and $F(q_{n+1}) = 1$. These points serve as global boundaries for all distributions in the graph.
The left-tail CDF is obtained by linearly interpolating between $(q_{-1}, 0)$ and $(q_0, F(q_0))$, while the right-tail CDF is obtained by interpolating between $(q_n, F(q_n))$ and $(q_{n+1}, 1)$. We refer to this strategy as the \emph{sentinel method}.

\subsection{Lognormal projection}
In this scenario, we evaluate the performance of our projection algorithm on a lognormal random variable $X\sim \mathrm{LN}(0,1)$.
The PDF of $\mathrm{LN}(0,1)$ is given by 
\[
f_{\mathrm{LN}}(x) = \frac{1}{x\sqrt{2\pi}} \exp\left(-\frac{(\ln x)^2}{2}\right), \quad x > 0.
\]
The ground truth is set as the exact PDF and CDF of $\mathrm{LN}(0,1)$, and our model is used to fit the corresponding tail parameters. The Gaussian moment-matching approximation $\mathcal{N}(\mu_{\mathrm{LN}}, \sigma_{\mathrm{LN}}^2)$ is given by
\[
\mu_{\mathrm{LN}} = \exp\Bigl(\mu + \frac{\sigma^2}{2}\Bigr), \quad \sigma_{\mathrm{LN}}^2 = \bigl[\exp(\sigma^2) - 1\bigr]\exp(2\mu + \sigma^2),
\] 

The results are displayed in Fig.~\ref{fig:ln_cdf_pdf_compare}. 
\begin{figure}[ht]
    \centering
    \begin{subfigure}{0.48\textwidth}
        \centering
        \includegraphics[width=\linewidth]{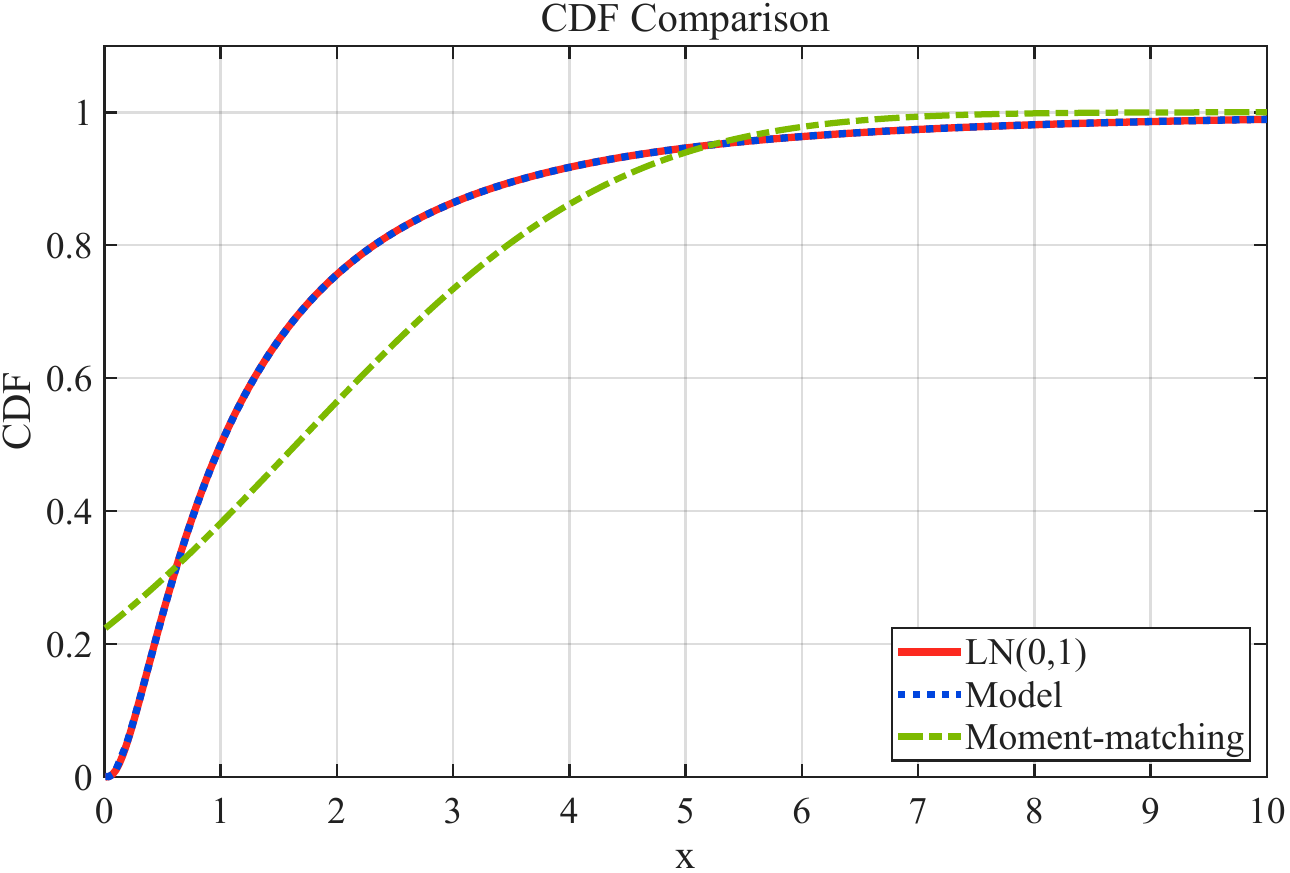}
        \caption{CDF comparison}
        \label{fig:ln_cdf}
    \end{subfigure}
    \hfill
    \begin{subfigure}{0.48\textwidth}
        \centering
        \includegraphics[width=\linewidth]{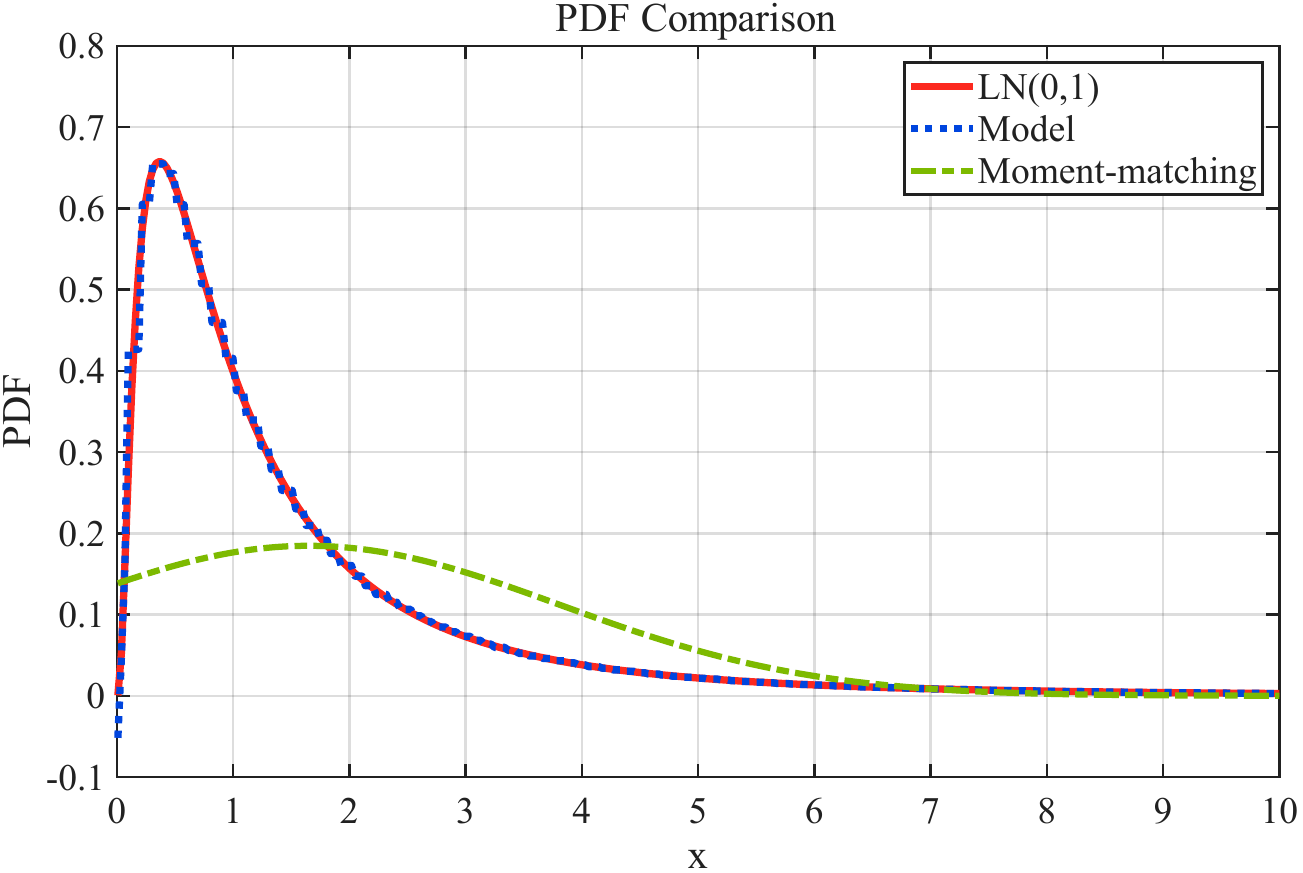}
        \caption{PDF comparison}
        \label{fig:ln_pdf}
    \end{subfigure}
    \caption{   Comparison of \subref{fig:ln_cdf} CDF and \subref{fig:ln_pdf} PDF between $\mathrm{LN}(0,1)$, our model results, and Gaussian results using the moment-matching method.  }
    \label{fig:ln_cdf_pdf_compare}
\end{figure}
The results demonstrate that our model provides an excellent approximation to the ground truth across the entire distribution. Although some deviations appear in the PDF comparison, these differences have minimal impact on quantile propagation and are therefore acceptable. 
In contrast, the Gaussian moment-matching approximation exhibits substantial errors, as expected from the strong skewness of the lognormal distribution. This demonstrates that our model effectively captures higher-order moment information and accurately projects lognormal inputs into our model space.
\subsection{Inverted binary tree propagation}
To evaluate the performance of our algorithm in delay propagation, we construct an inverted binary tree of $5$ levels. The node $0$ at level $0$ is the source of the graph, where no input delay is imposed. It distributes delays to the nodes at level $1$. At each node starting from level $2$, the output distribution is obtained by taking the maximum of the two incoming distributions and then adding its own incremental delay. This construction guarantees that, at every merge point in the graph, the incoming delays are independent. The structure of the graph is shown in Fig.~\ref{fig:inverted_binary_tree}.

\begin{figure}[ht]
    \centering
    \includegraphics[width=0.5\linewidth]{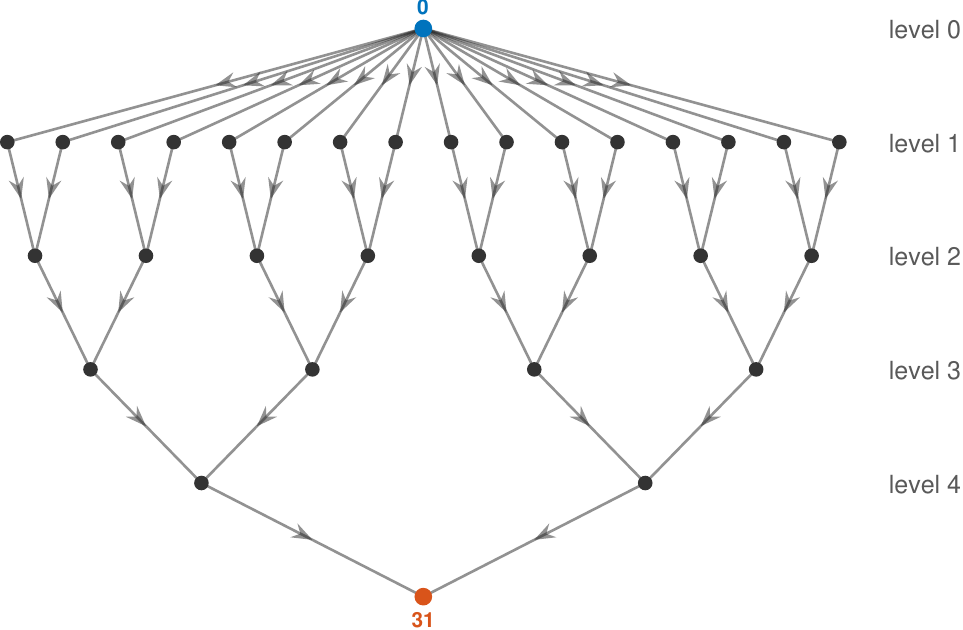}
    \caption{Inverted binary tree structure}
    \label{fig:inverted_binary_tree}
\end{figure}

In this experiment, each edge delay is modeled as a Gaussian random variable, whose mean $\mu$ is uniformly sampled from $[3.0,5.0]$ and standard deviation $\sigma$ is uniformly sampled from $[1.5, 2.0]$. 

Our primary interest is the distribution at the sink node, i.e., node $31$. The ground truth is obtained by a Monte Carlo simulation with $10^7$ samples. 

The resulting CDF and PDF comparisons are presented in Fig.~\ref{fig:bt_cdf_pdf_compare}, and the corresponding quantile errors are listed in Table~\ref{tab:quantile-bt}.

\begin{figure}[t]
  \centering

  \begin{subfigure}[t]{0.48\textwidth}
    \centering
    \includegraphics[width=\linewidth]{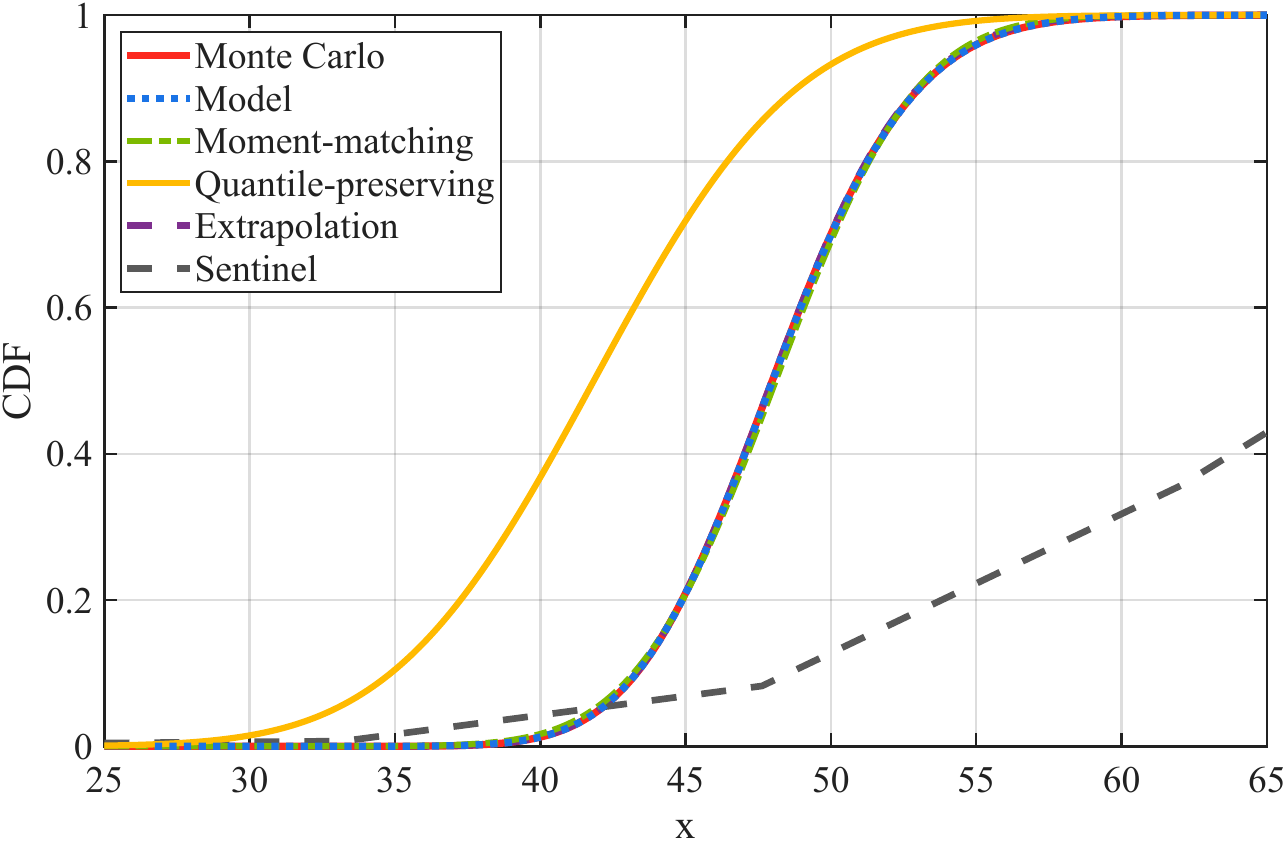}
    \caption{raw CDF}
    \label{fig:bt_cdf_raw}
  \end{subfigure}
  \hfill
  \begin{subfigure}[t]{0.48\textwidth}
    \centering
    \includegraphics[width=\linewidth]{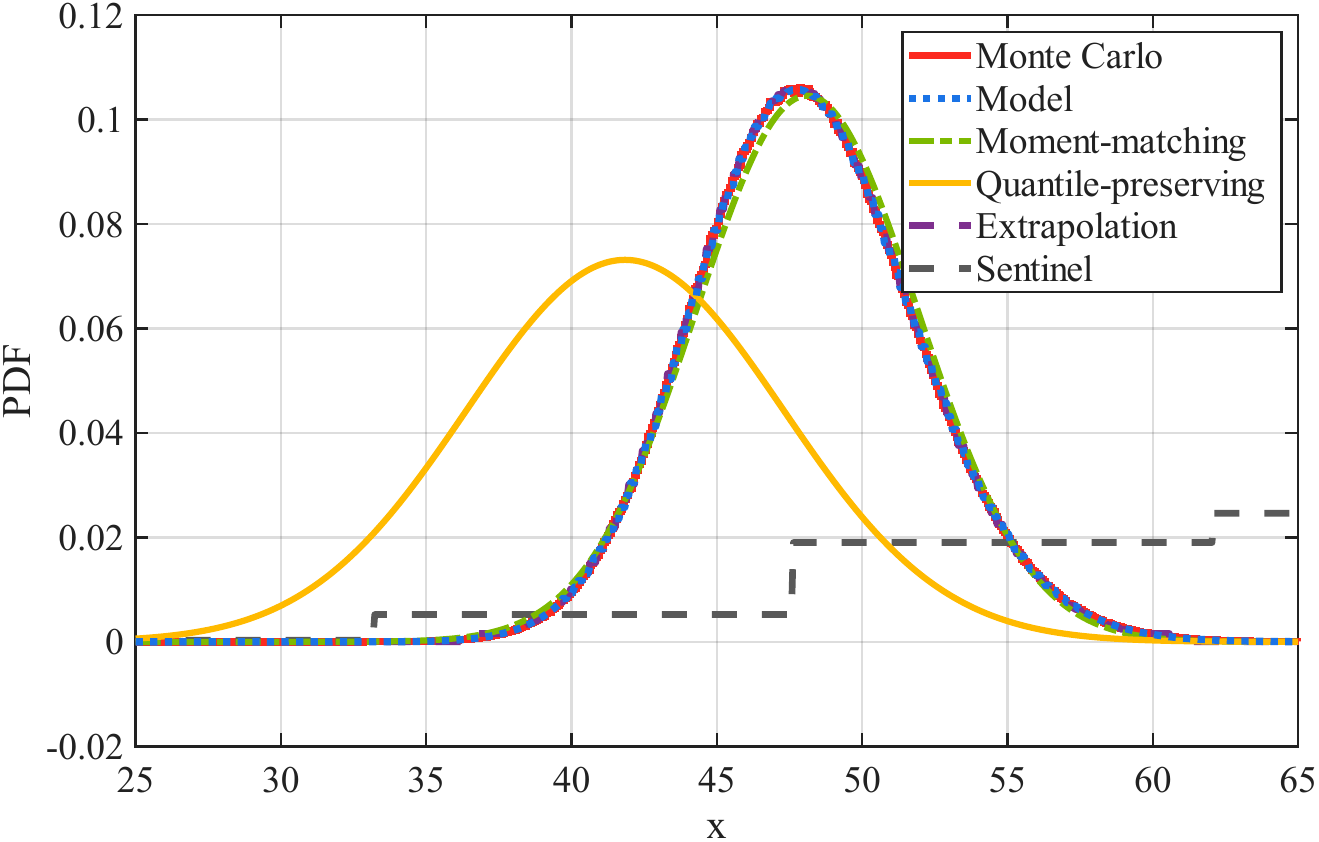}
    \caption{raw PDF}
    \label{fig:bt_pdf_raw}
  \end{subfigure}

  \vspace{0.5mm}

  \begin{subfigure}[t]{0.48\textwidth}
    \centering
    \includegraphics[width=\linewidth]{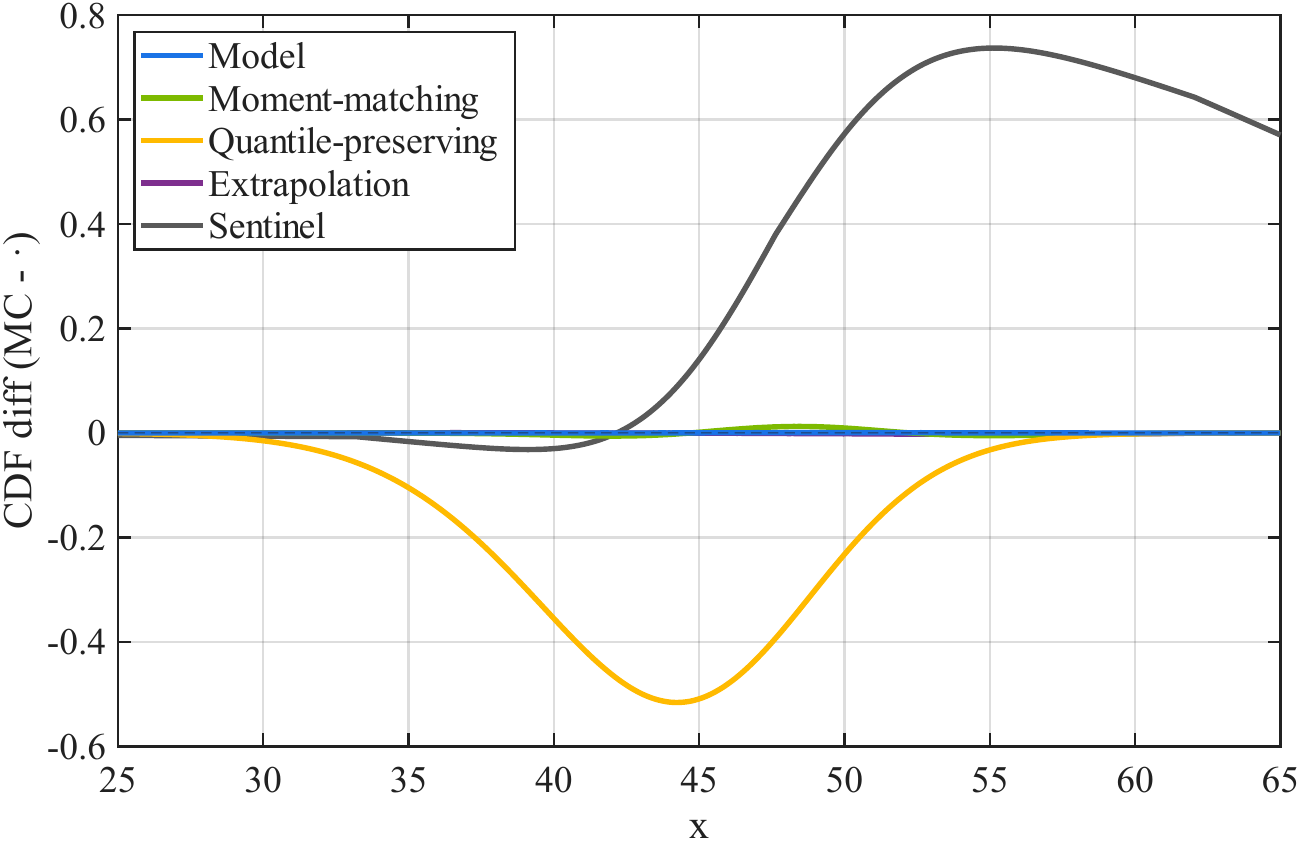}
    \caption{CDF error}
    \label{fig:bt_cdf_diff}
  \end{subfigure}
  \hfill
  \begin{subfigure}[t]{0.48\textwidth}
    \centering
    \includegraphics[width=\linewidth]{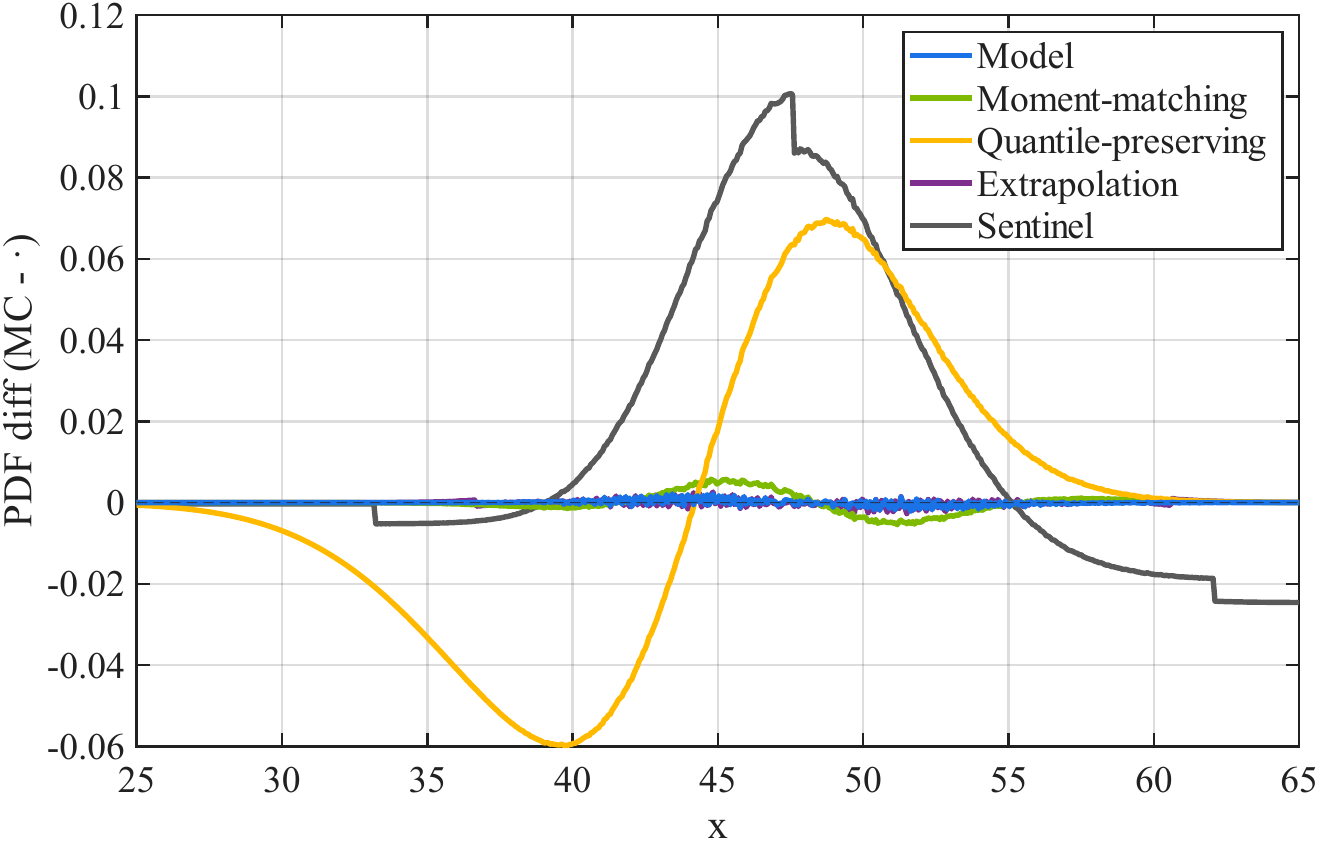}
    \caption{PDF error}
    \label{fig:bt_pdf_diff}
  \end{subfigure}

  \caption{Delay propagation on an inverted binary tree structure: comparison between Monte Carlo simulation, our model results, the Gaussian moment-matching method, the Gaussian quantile-preserving method, the extrapolation method, and the sentinel method.}
  \label{fig:bt_cdf_pdf_compare}
\end{figure}
\begin{table}[ht]
\centering
\caption{Quantile comparison for the inverted binary tree case.}
\label{tab:quantile-bt}
\begin{tabular}{lcccc}
\toprule
\textbf{Method} & $0.135\%$ & $1\%$ & $99\%$ & $99.865\%$ \\
\midrule
Model                & -0.028\% & -0.008\% & 0.009\% & -0.019\% \\
Extrapolation        & 0.228\% & 0.023\% & -0.484\% & -1.489\% \\
Gaussian moment-matching          & -2.066\% & -1.158\% & -0.991\% & -1.798\% \\
Gaussian quantile-preserving & -31.928\% & -26.540\% & -5.240\% & -4.010\% \\
\bottomrule
\end{tabular}
\end{table}

It can be observed from the CDF graph that the Gaussian quantile-preserving method exhibits substantial errors. This is expected, as each max operation within the Gaussian framework would introduce significant model bias, and these errors accumulate during the propagation process. Even when a Gaussian approximation is obtained directly from the Monte Carlo distribution via moment matching, the resulting error remains noticeably larger than that of our model.

Regarding the tail models, the sentinel method fails to produce a reasonable approximation, whereas the extrapolation method performs comparably to our model, though with noticeably larger tail errors. Examination of the PDF error curves further reveals that our model exhibits better approximation behavior, particularly in the tails. This indicates our model's better capability in capturing tail characteristics than linear tail models. 

The high-frequency fluctuations in the PDF error curves are mainly due to the numerical noise introduced by Monte Carlo simulations and the model error we introduced in approximating the intermediate segment PDF as piecewise constants. If higher-degree approximations are employed in the intermediate segment, the magnitude of these oscillations is expected to decrease accordingly.

\subsection{Ladder graph propagation}
To further evaluate the performance of our algorithm on deeper graph structures, we consider a ladder graph with $20$ levels. This setup is analogous to that in~\cite{mishagli_radial_2020}, where this graph was used to assess the Gaussian RBF model. In the Gaussian RBF model, the PDF of each delay distribution is approximated by a mixture of Gaussian basis functions as 
\[
f(x) = \sum\limits_{i = 1}^M \omega_i \exp\left( -\frac{(x-\mu_i)^2}{2\sigma_i^2}\right).
\]

On the $i$-th node, where $i = 1, \dotsc, 20$, we compute the output of this node as
\[
X_i = \max\{X_{i-1} + d_{i-1, i},\; \xi_i\} + d_{i},
\]
where $X_i$ denotes the output delay at node $i$, $d_{i-1,i}$ denotes the delay from node $i-1$ to node $i$, $d_i$ denotes the incremental delay on node $i$, and $\xi_i$ represents the input delay of node $i$, which can be seen as the delay on the edge $(0, i)$. The ladder graph structure is displayed in Fig.~\ref{fig:ladder}.

\begin{figure}[H]
    \centering
    \includegraphics[width=0.7\linewidth]{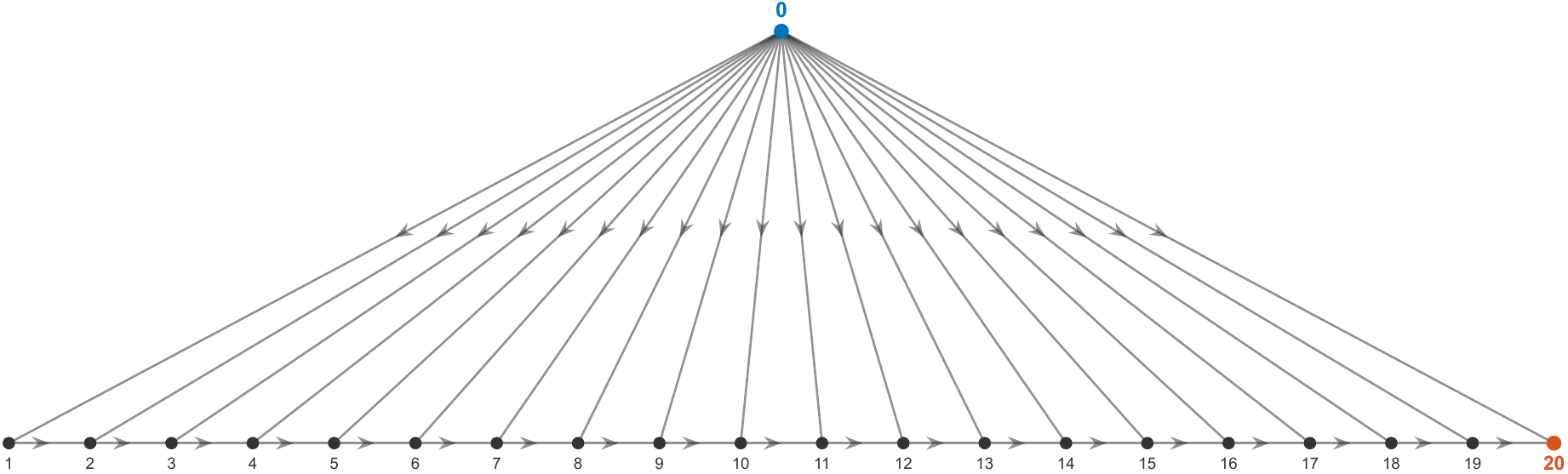}
    \caption{Ladder structure}
    \label{fig:ladder}
\end{figure}

To introduce more variability in this scenario, the input delay on the $i$-th node $\xi_i$ is set as a Gaussian random variable whose mean and standard deviation are drawn from $\mathcal{U}(i, 3i)$ and $\mathcal{U}(1.5\sqrt{i}, 2.5\sqrt{i})$, respectively. Each edge delay of $\{d_{i-1,i}\}$ and each incremental node delay $\{d_i\}$ is also modeled as a Gaussian random variable with mean sampled from $\mathcal{U}(1.0, 3.0)$ and standard deviation sampled from $\mathcal{U}(1.5, 2.5)$. Our primary focus is the output delay at the final node, i.e., $X_{20}$. The ground truth is obtained by a Monte Carlo simulation with $10^7$ samples. 

The resulting PDF and CDF comparisons are presented in Fig.~\ref{fig:ladder_cdf_pdf_compare}, and the corresponding quantile errors are summarized in Table~\ref{tab:quantile-ladder}.
\begin{figure}[t]
  \centering

  \begin{subfigure}[t]{0.48\textwidth}
    \centering
    \includegraphics[width=\linewidth]{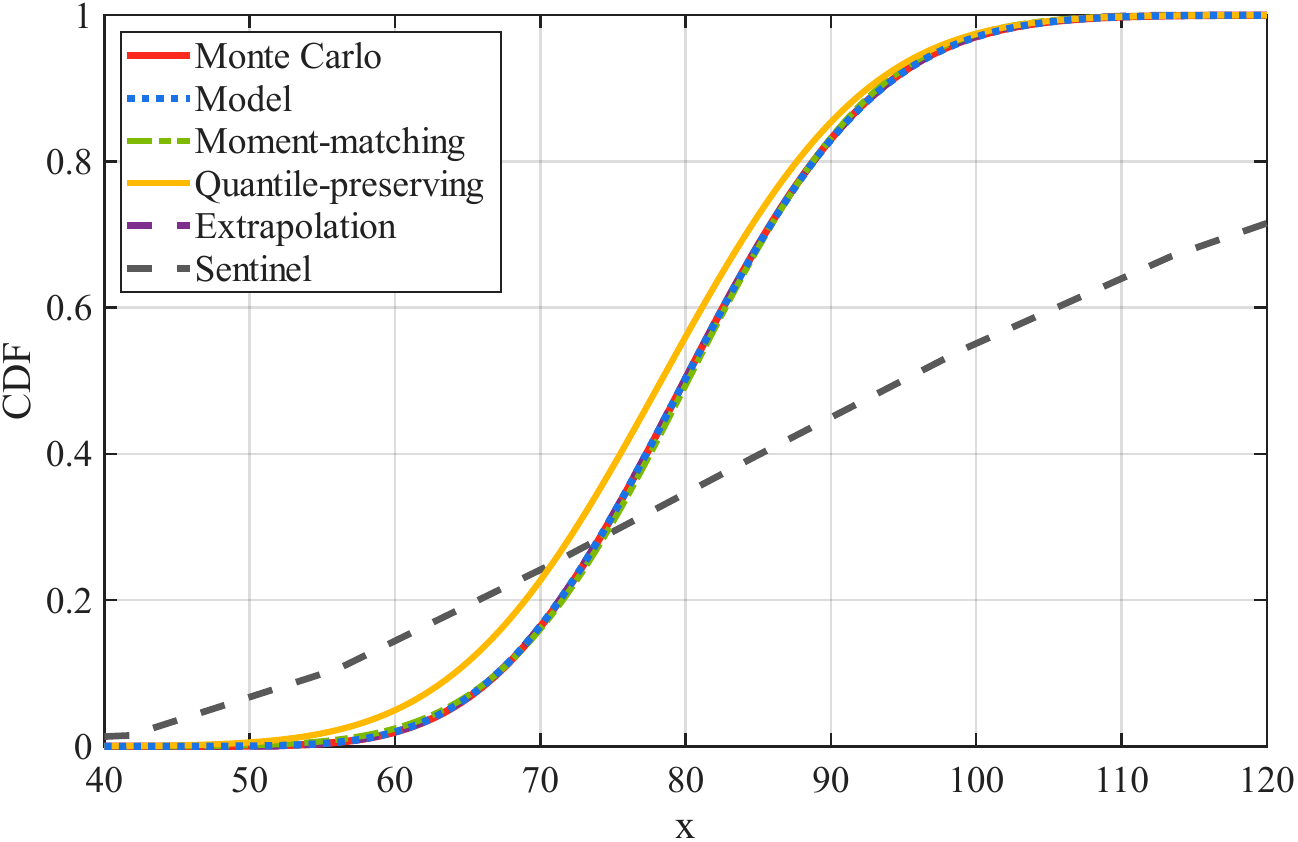}
    \caption{raw CDF}
    \label{fig:ladder_cdf_raw}
  \end{subfigure}
  \hfill
  \begin{subfigure}[t]{0.48\textwidth}
    \centering
    \includegraphics[width=\linewidth]{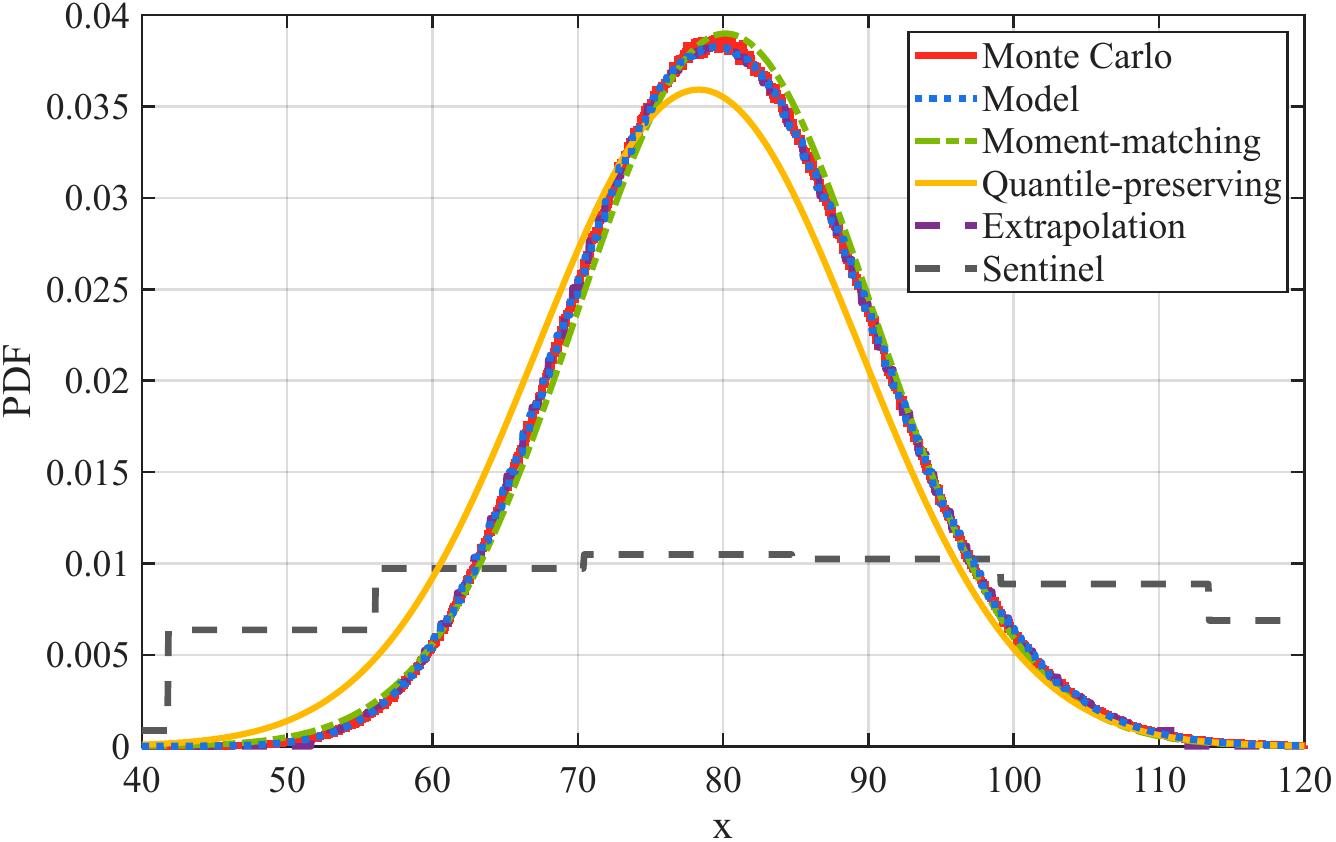}
    \caption{raw PDF}
    \label{fig:ladder_pdf_raw}
  \end{subfigure}

  \vspace{0.5mm}

  \begin{subfigure}[t]{0.48\textwidth}
    \centering
    \includegraphics[width=\linewidth]{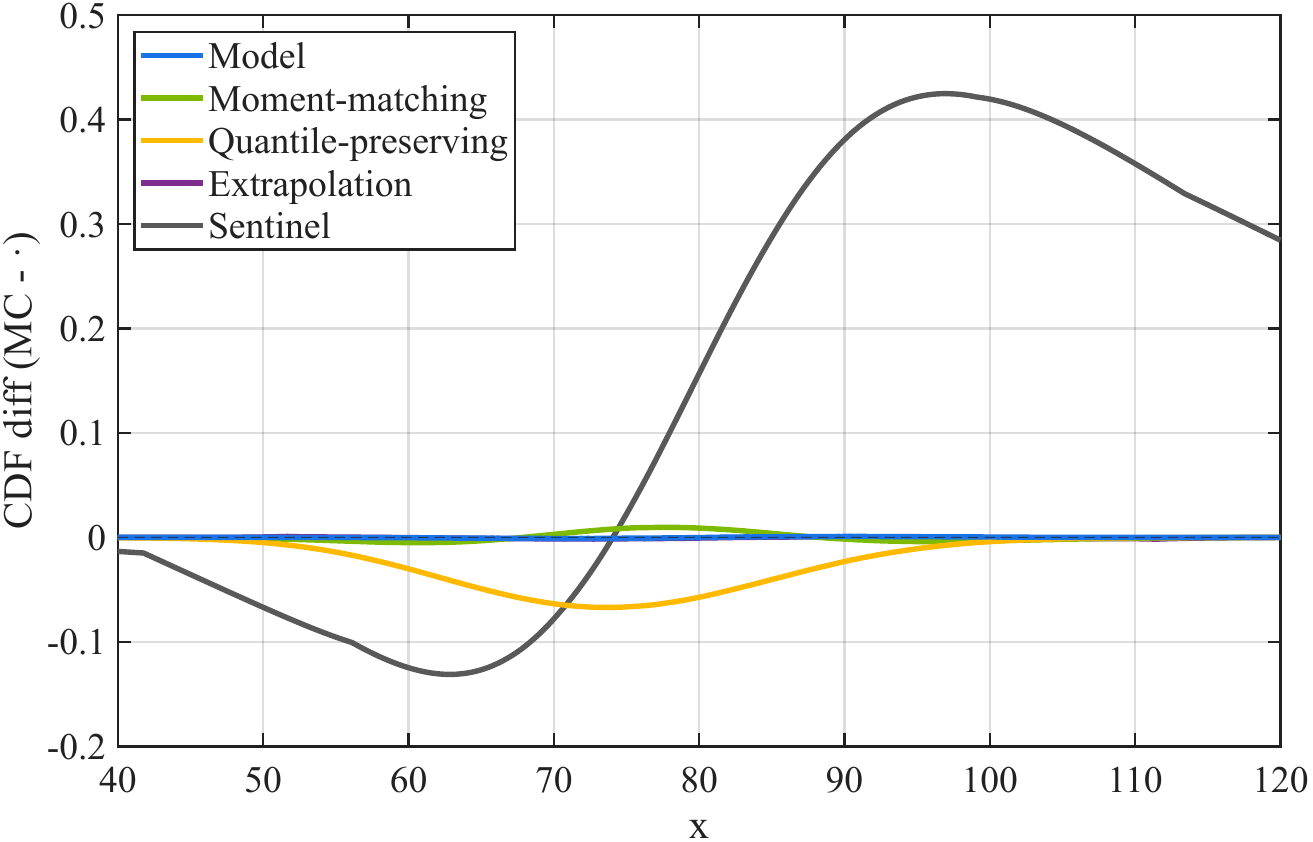}
    \caption{CDF error}
    \label{fig:ladder_cdf_diff}
  \end{subfigure}
  \hfill
  \begin{subfigure}[t]{0.48\textwidth}
    \centering
    \includegraphics[width=\linewidth]{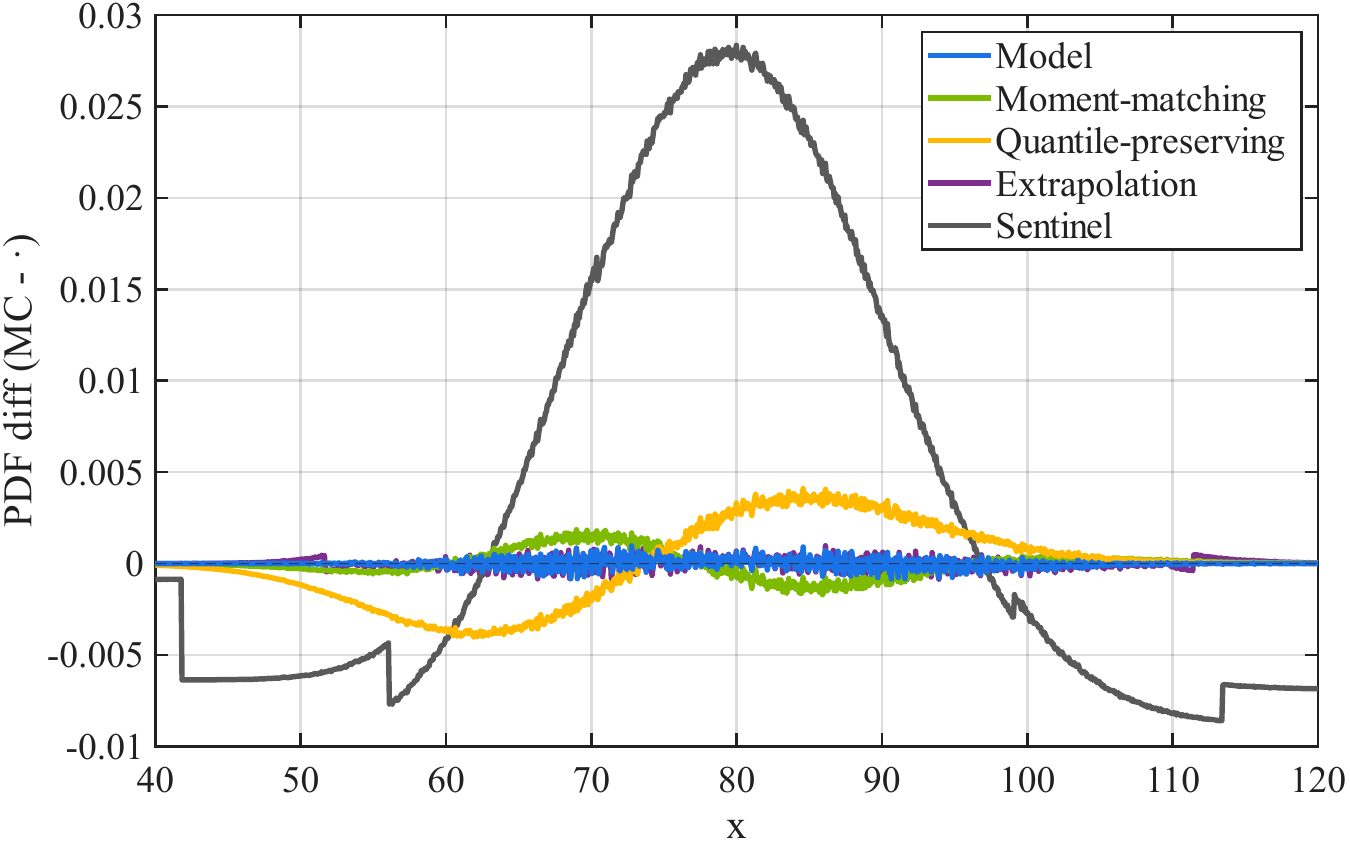}
    \caption{PDF error}
    \label{fig:ladder_pdf_diff}
  \end{subfigure}

  \caption{Delay propagation on a ladder structure: comparison between Monte Carlo simulation, our model results, the Gaussian moment-matching method, the Gaussian quantile-preserving method, the extrapolation method, and the sentinel method.}
  \label{fig:ladder_cdf_pdf_compare}
\end{figure}
\begin{table}[ht]
\centering
\caption{Quantile comparison for the ladder case.}
\label{tab:quantile-ladder}
\begin{tabular}{lcccc}
\toprule
\textbf{Method} & $0.135\%$ & $1\%$ & $99\%$ & $99.865\%$ \\
\midrule
Model                & -0.124\% & -0.112\% & 0.042\% & 0.008\% \\
Extrapolation        & 1.421\% & 0.025\% & -0.321\% & -1.985\% \\
Gaussian moment-matching           & -5.680\% & -2.510\% & -0.764\% & -1.116\% \\
Gaussian quantile-preserving & -14.180\% & -9.202\% & -0.576\% & -0.415\% \\
\bottomrule
\end{tabular}
\end{table}

The results for the ladder graph exhibit trends similar to those observed in the inverted binary tree, though the greater number of propagation stages in the ladder graph leads to additional error accumulation. While our model does not outperform the extrapolation method at the $1\%$ quantile, it remains more robust overall, particularly at the $0.135\%$ and $99.865\%$ quantiles, which are the most critical quantiles.

The CDF error patterns further mirror those in the inverted binary tree experiment. Although the extrapolation method yields comparable CDF curves, its tail errors exhibit pronounced peaks, whereas our model maintains a smoother and more stable error profile. 

From this case and the inverted binary tree case, it can be concluded that our model and algorithm possess strong propagation capability across graph structures, and that our tail model has better approximation power than those reference linear tail models.

\section{Conclusion}\label{sec:conclusion}
In this work, we proposed a three-segment distribution model for stochastic delay propagation on directed acyclic graphs, combining polynomial-Gaussian tail representations with piecewise polynomial approximation in the intermediate segment. We established both $L^2$-convergence and pointwise convergence results, and developed a propagation framework based on CDF sampling and Levenberg--Marquardt tail fitting. Numerical experiments on several graph structures demonstrated the accuracy and robustness of the proposed approach.

Several directions remain for future investigation. Although the numerical computation of the convolution in the sum operation has been optimized, it remains a computational bottleneck. Moreover, our current framework supports only binary operations, so extending it to multi-input settings is a natural next step. Finally, the independence assumption may not hold in practical scenarios; incorporating dependency information, for example through correlation-aware propagation in the spirit of~\cite{Kunhyuk_Statistical_2005}, is another important direction.

\bibliographystyle{unsrt} 
\bibliography{references}

@book{szeg1939orthogonal,
  title={Orthogonal polynomials},
  author={Szeg, Gabor},
  volume={23},
  year={1939},
  publisher={American Mathematical Soc.}
}

@book{cramer1999mathematical,
  title={Mathematical methods of statistics},
  author={Cram{\'e}r, Harald},
  volume={9},
  year={1999},
  publisher={Princeton university press}
}

@article{bosak_statistical_2024,
  title={Statistical static timing analysis via modern optimization lens: I. Histogram-based approach},
  author={Bos{\'a}k, Adam and Mishagli, Dmytro and Mare{\v{c}}ek, Jakub},
  journal={Optimization and Engineering},
  volume={25},
  number={3},
  pages={1405--1429},
  year={2024},
  publisher={Springer}
}

@article{uspensky_development_1926,
  title={On the development of arbitrary functions in series of Hermite's and Laguerre's polynomials},
  author={Uspensky, James V},
  journal={Annals of Mathematics},
  volume={28},
  number={1/4},
  pages={593--619},
  year={1926},
  publisher={JSTOR}
}

@book{chadha2009sta,
  title={Static timing analysis for nanometer designs: A practical approach},
  author={Bhasker, Jayaram and Chadha, Rakesh},
  year={2009},
  publisher={Springer Science \& Business Media}
}

@inproceedings{Kunhyuk_Statistical_2005,
  title={Statistical timing analysis using levelized covariance propagation},
  author={Kang, Kunhyuk and Paul, Bipul C and Roy, Kaushik},
  booktitle={Design, Automation and Test in Europe},
  pages={764--769},
  year={2005},
  organization={IEEE}
}

@inproceedings{Chopra_A_2006,
  title={A new statistical max operation for propagating skewness in statistical timing analysis},
  author={Chopra, Kaviraj and Zhai, Bo and Blaauw, David and Sylvester, Dennis},
  booktitle={Proceedings of the 2006 IEEE/ACM international conference on Computer-aided design},
  pages={237--243},
  year={2006}
}

@article{ben_hcine_fitting_2014,
  title={Fitting the log skew normal to the sum of independent lognormals distribution},
  author={Hcine, Marwane Ben and Bouallegue, Ridha},
  journal={arXiv preprint arXiv:1501.02344},
  year={2015}
}

@article{mishagli_gate_2024,
  title={Gate--Level Statistical Timing Analysis: Exact Solutions, Approximations and Algorithms},
  author={Mishagli, Dmytro and Koskin, Eugene and Blokhina, Elena},
  journal={arXiv preprint arXiv:2401.03588},
  year={2024}
}

@article{ClarkGreatest1961,
  title={The greatest of a finite set of random variables},
  author={Clark, Charles E},
  journal={Operations Research},
  volume={9},
  number={2},
  pages={145--162},
  year={1961},
  publisher={INFORMS}
}

@inproceedings{TsukiyamaCorrelation2001,
  title={A statistical static timing analysis considering correlations between delays},
  author={Tsukiyama, Shuji and Tanaka, Masakazu and Fukui, Masahiro},
  booktitle={Proceedings of the 2001 Asia and South Pacific Design Automation Conference},
  pages={353--358},
  year={2001}
}

@article{FentonLN1960,
  title={The sum of log-normal probability distributions in scatter transmission systems},
  author={Fenton, Lawrence},
  journal={IRE Transactions on communications systems},
  volume={8},
  number={1},
  pages={57--67},
  year={1960},
  publisher={IEEE}
}

@article{JinKurtosis2022,
  title={A statistical cell delay model for estimating the 3$\sigma$ delay by matching kurtosis},
  author={Jin, Leilei and Fu, Wenjie and Yan, Hao and Shi, Longxing},
  journal={IEEE Transactions on Circuits and Systems II: Express Briefs},
  volume={69},
  number={6},
  pages={2932--2936},
  year={2022},
  publisher={IEEE}
}

@inproceedings{berkelaar1997statistical,
  title     = {Statistical Delay Calculation, a Linear Time Method},
  author    = {Berkelaar, M. R. C. M.},
  booktitle = {Proceedings of the 1997 ACM/IEEE International Workshop on Timing Issues in the Specification and Synthesis of Digital Systems},
  pages     = {15--24},
  year      = {1997}
}

@article{ChengPolynomial2008,
  title={Non-Gaussian statistical timing analysis using second-order polynomial fitting},
  author={Lerong Cheng and Jinjun Xiong and Lei He},
  journal={2008 Asia and South Pacific Design Automation Conference},
  year={2008},
  pages={298-303},
  url={https://api.semanticscholar.org/CorpusID:6159807}
}

@article{AzumaApproximating2017,
  title={Approximating the maximum of Gaussians by a Gaussian mixture model for statistical designs},
  author={Daiki Azuma and Shuji Tsukiyama and Masahiro Fukui},
  journal={2017 European Conference on Circuit Theory and Design (ECCTD)},
  year={2017},
  pages={1-4},
  url={https://api.semanticscholar.org/CorpusID:9039226}
}

@article{FreeleyStatistical2018,
  title={Statistical Simulations of Delay Propagation in Large Scale Circuits Using Graph Traversal and Kernel Function Decomposition},
  author={Jennifer Freeley and Dmytro Mishagli and Tom Brazil and Elena Blokhina},
  journal={2018 15th International Conference on Synthesis, Modeling, Analysis and Simulation Methods and Applications to Circuit Design (SMACD)},
  year={2018},
  pages={213-9},
  url={https://api.semanticscholar.org/CorpusID:52020320}
}

@article{mishagli_radial_2020,
  title={Radial Basis Functions Based Algorithms for Non-Gaussian Delay Propagation in Very Large Circuits},
  author={Dmytro Mishagli and Elena Blokhina},
  journal={Computational Science -- ICCS 2020},
  year={2020},
  volume={12141},
  pages={217 - 229},
  url={https://api.semanticscholar.org/CorpusID:219888423}
}

@article{capodaglio_approximation_2021,
  title={Approximation of Probability Density Functions for PDEs with Random Parameters Using Truncated Series Expansions},
  author={Giacomo Capodaglio and Max D. Gunzburger and Henry P. Wynn},
  journal={Vietnam Journal of Mathematics},
  year={2018},
  volume={49},
  pages={685 - 711},
  url={https://api.semanticscholar.org/CorpusID:221865524}
}

@article{DufresneLi2016,
  author = {Daniel Dufresne and HanBo Li},
  title = {Pricing Asian Options: Convergence of Gram--Charlier Series},
  journal = {Actuarial Research Clearing House},
  volume = {2016 Issue 2},
  year = {2016},
}

@article{blinnikov_expansions_1998,
  title={Expansions for nearly Gaussian distributions},
  author={Sergei Blinnikov and Richhild Moessner},
  journal={Astronomy \& Astrophysics Supplement Series},
  year={1997},
  volume={130},
  pages={193-205},
  url={https://api.semanticscholar.org/CorpusID:54041502}
}

@article{Marquardt1963LM,
  title={An Algorithm for Least-Squares Estimation of Nonlinear Parameters},
  author={Donald W. Marquardt},
  journal={Journal of The Society for Industrial and Applied Mathematics},
  year={1963},
  volume={11},
  pages={431-441},
  url={https://api.semanticscholar.org/CorpusID:122360030}
}

@book{gough2009gnu,
  title = {GNU scientific library reference manual},
  author = {Gough, Brian},
  year = {2009},
  publisher = {Network Theory Ltd.},
}

@book{press1989numerical,
  title = {Numerical recipes in Pascal: the art of scientific computing},
  author = {Press, William H},
  volume = {1},
  year = {1989},
  publisher = {Cambridge university press},
}
\appendix
\section*{Appendix}
\section{Right-tail model derivation}\label{AppendixA}
\addcontentsline{toc}{section}{Appendix: Right-Tail Model Derivation}

To complement the left-tail derivation in Sec.~\ref{sec:model} and the tail-fitting algorithm proposed in Sec.~\ref{sec:imp}, the analogous formulation for the right tail is presented here.
Suppose the right-tail PDF is given by Eq.~\eqref{eq:modelPDF} as follows: 
\[
f\bigl(t; \{c_j^{(r)}\}_{j=0}^N, \mu^{(r)}, \sigma^{(r)}\bigr) = \sum_{j=0}^{N} c_j^{(r)} t^j \phi(t;\mu^{(r)}, \sigma^{(r)}), \quad t \ge q_n,
\]
where \( \phi(t;\mu, \sigma) \) is the Gaussian PDF for $\mathcal{N}(\mu,\sigma^2)$.
The corresponding right-tail CDF is computed by
\[
\widetilde{F}\bigl(x; \{c_j^{(r)}\}_{j=0}^N, \mu^{(r)}, \sigma^{(r)}\bigr) = 1 - \int_x^{+\infty} f(t)\dt.
\]
We denote $J_i$ as 
\[
J_i(x;\mu,\sigma) \coloneqq \int_x^{+\infty} t^i \phi(t; \mu, \sigma)\, \dt.
\]
We derive the recurrence relations for \( J_i(x;\mu,\sigma) \) inductively:
\begin{align*}
J_0(x;\mu,\sigma) &= 1 - \Phi(x;\mu,\sigma), \\
J_1(x;\mu,\sigma) &= \mu J_0(x;\mu,\sigma) + \sigma \phi(x;\mu,\sigma), \\
J_i(x;\mu,\sigma) &= \mu J_{i-1}(x;\mu,\sigma) + (i-1)\sigma^2 J_{i-2}(x;\mu,\sigma) + x^{i-1}\sigma \phi(x;\mu,\sigma), \quad i \ge 2.
\end{align*}
Thus, for \( x \ge q_n \), the right-tail CDF lies in the span of
\[
\text{span}\left\{ 1 - \Phi\bigl(x;\mu^{(r)}, \sigma^{(r)}\bigr),\, \phi\bigl(x;\mu^{(r)}, \sigma^{(r)}\bigr),\, x\phi\bigl(x;\mu^{(r)}, \sigma^{(r)}\bigr),\, \dots,\, x^{N-1}\phi\bigl(x;\mu^{(r)}, \sigma^{(r)}\bigr) \right\}.
\]
For the max operator, we focus on the optimization of the following two expressions:
\[
\left\{
\begin{array}{ll}
\displaystyle
\int_{q_n}^{+\infty}
\Bigl(
1 - F_1(x)F_2(x)
- \sum_{j=0}^{N} c_j^{(r)} J_{j}\bigl(x;\mu^{(r)},\sigma^{(r)}\bigr)
\Bigr)^{2} \dx, 
\\[2ex]
\displaystyle
1 - F_1(x)F_2(x)
- \sum_{j=0}^{N} c_j^{(r)} J_{j}\bigl(x;\mu^{(r)},\sigma^{(r)}\bigr),
\quad \forall x \ge q_n.
\end{array}
\right.
\]

The tail-fitting optimization problem for the right tail in our algorithm is analogous to that for the left tail. Suppose $y_i$ is the exact CDF at $x_i$. We solve the following nonlinear least-squares problem:
\begin{equation*}
\min \sum_{i=1}^k \biggl[ \Bigl( \widetilde{F}\bigl(x_i; \{c_j^{(r)}\}_{j=0}^N, \mu^{(r)}, \sigma^{(r)}\bigr) - y_i \Bigr) \Big/ (1-y_i) \biggr]^2 + \lambda \bigl(c_0^{(r)}-1\bigr)^2 + \lambda\biggl( \sum\limits_{j = 1}^N \bigl(c_j^{(r)}\bigr)^2\biggr),
\end{equation*}

The Jacobian matrix for this expression can be computed by differentiating each \( J_j(x_i; \mu^{(r)}, \sigma^{(r)}) \) for the parameters \( \{c_j^{(r)}\}_{j=0}^N \), \( \mu^{(r)} \), and \( \sigma^{(r)} \). We use the same reparameterization \( \sigma^{(r)} = \exp(p_\sigma) \) to ensure positivity.

\section{Model sensitivity to correlation}\label{AppendixB}
While the proposed model and propagation algorithm demonstrate strong expressive power and robust performance under the independence assumption, their accuracy may degrade in settings where correlations within the graph are non-negligible. To examine this limitation, we consider the simple graph shown in Fig.~\ref{fig:combCase}.
\begin{figure}[H]
    \centering
    \includegraphics[width=0.4\linewidth]{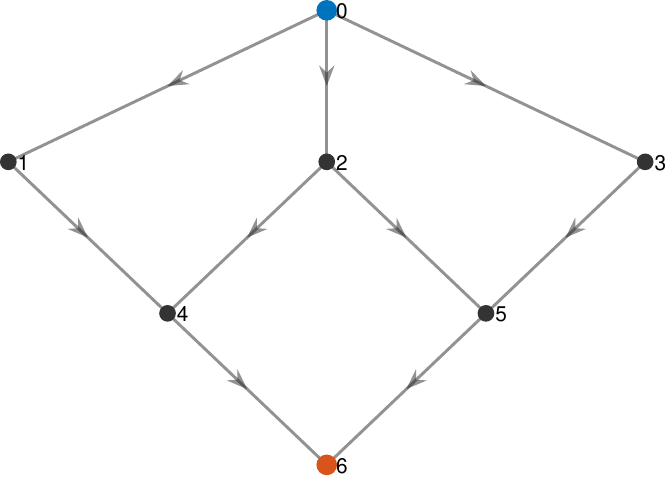}
    \caption{Diamond tree structure}
    \label{fig:combCase}
\end{figure}

The incremental delays on each node and edge are modeled as independent Gaussian random variables, whose $\mu$ and $\sigma$ are drawn randomly from $\mathcal{U}(2.0,5.0)$ and $\mathcal{U}(1.5, 2.0)$ respectively. 

As illustrated in Fig.~\ref{fig:combCase}, the four nodes $\{2,4,5,6\}$ form a diamond structure, which induces statistical dependence when the input delay of node~$6$ is computed via a max operation over two paths that share common ancestors.

\begin{figure}[H]
    \centering
    \begin{subfigure}[t]{0.48\textwidth}
        \centering
        \includegraphics[width=\linewidth,height=5.0cm]{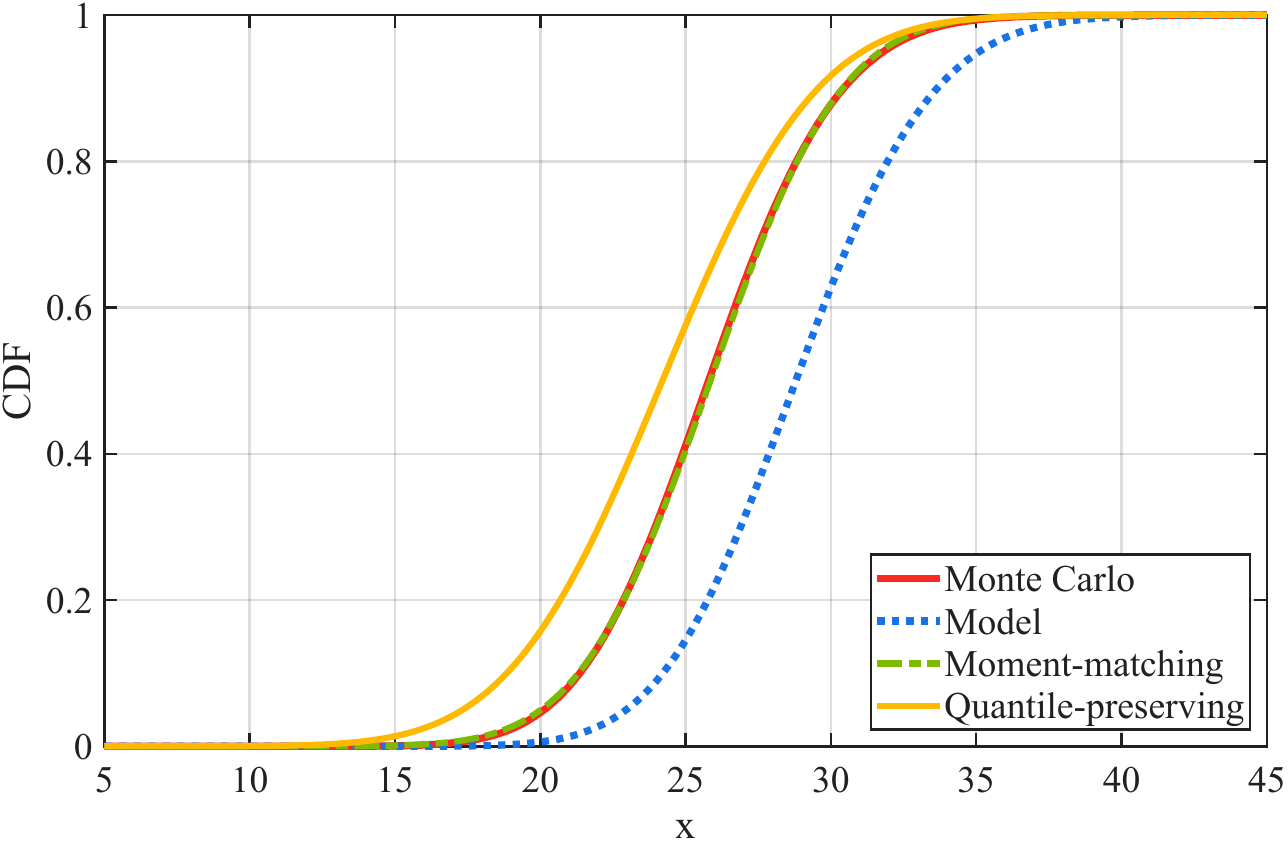}
        \caption{CDF comparison}
        \label{fig:corr_cdf}
    \end{subfigure}
    \hfill
    \begin{subfigure}[t]{0.48\textwidth}
        \centering
        \includegraphics[width=\linewidth,height=5.0cm]{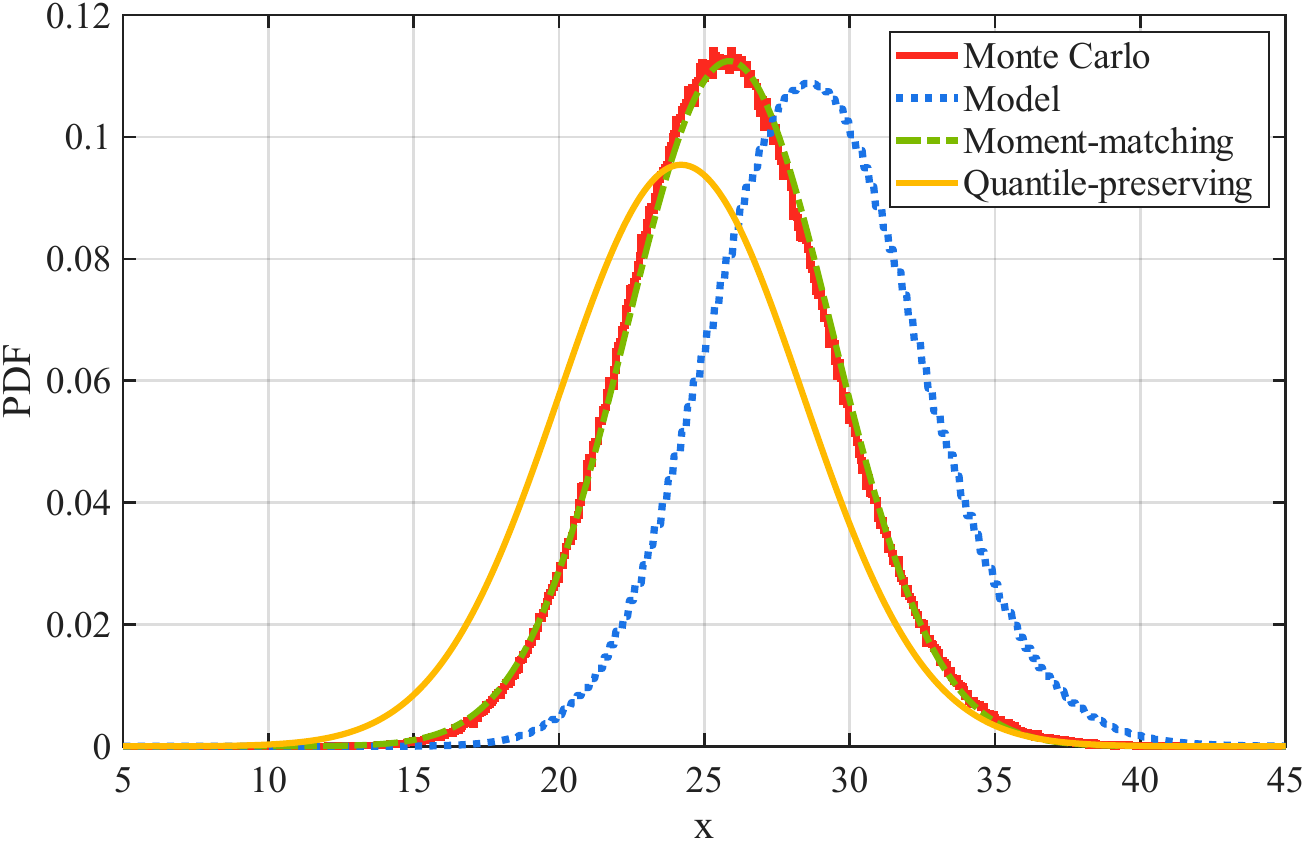}
        \caption{PDF comparison}
        \label{fig:corr_pdf}
    \end{subfigure}

    \caption{Comparison of \subref{fig:corr_cdf} CDF and \subref{fig:corr_pdf} PDF of the delay propagation on a diamond tree structure between Monte Carlo simulation, our model results, the moment-matching method, and the Gaussian quantile-preserving method.}
    \label{fig:corr_cdf_pdf_compare}
\end{figure}

As expected, Fig.~\ref{fig:corr_cdf_pdf_compare} shows that our model exhibits noticeable deviations from the Monte Carlo ground truth. This behavior is a characteristic of propagation schemes that ignore correlations among random variables.

\end{document}